\renewcommand{\dim}{\operatorname{dim}}
\newcommand{\Aut}{\operatorname{Aut}}
\newcommand{\End}{\operatorname{End}}
\newcommand{\Z}{{\mathbb Z}}
\newcommand{\M}{{\mathcal M}}
\newcommand{\Q}{{\mathbb Q}}
\newcommand{\G}{\mathcal{G}}
\newcommand{\E}{{\mathcal E}}
\newtheorem{theorem}{Theorem}[section]
\newtheorem{lemma}[theorem]{Lemma}
\newtheorem{proposition}[theorem]{Proposition}
\theoremstyle{definition}
\newtheorem{definition}[theorem]{Definition}
\newtheorem{corollary}[theorem]{Corollary}
\newtheorem{remark}[theorem]{Remark}
\numberwithin{equation}{section}
\begin{document}

\title{Homotopically rigid Sullivan algebras and Their applications}

\author{Cristina Costoya}
\address{Departamento de Computaci\'on,
Universidade da Coru{\~n}a, Campus de Elvi{\~n}a, s/n, 15071 - A Coru{\~n}a, Spain.}
\email{cristina.costoya@udc.es}
\thanks{The first author was partially supported by Mi\-nis\-te\-rio de Econom\'ia y Competitividad (Spain), grant  MTM2016-79661-P,  and by Xunta de Galicia grant EM2013/16}
\author{David M\'endez}
\address{Departamento de \'Algebra, Geometr{\'\i}a y Topolog{\'\i}a, Universidad de M{\'a}laga, E-29071-M{\'a}laga, Spain}
\email{davmenmar@uma.es}
\thanks{The second author was partially supported by Ministerio de Educaci\'on, Cultura y Deporte grant FPU14/05137, by FEDER-MEC grants MTM2013-41768-P and MTM2016-78647-P, by Ministerio de Econom\'ia y Competitividad (Spain) grant MTM2016-79661-P, and by Xunta de Galicia grant EM2013/16}
\author{Antonio Viruel}
\address{Departamento de \'Algebra, Geometr{\'\i}a y Topolog{\'\i}a, Universidad de M{\'a}laga, E-29071-M{\'a}laga, Spain}
\email{viruel@uma.es}
\address{IRMP, Universit\'e catholique de Louvain, 2 Chemin du Cyclotron, B-1348 Louvain-la-Neuve, Belgium}
\email{pascal.lambrechts@uclouvain.be}
\address{University of Regina, Department of Mathematics and Statistics\\
College West, Regina, CANADA}%
\email{stanley@math.uregina.ca}
\thanks{The third author was partially supported by FEDER-MEC grants MTM2013-41768-P and MTM2016-78647-P, and by Xunta de Galicia grant EM2013/16}

\subjclass{Primary 55P10, 55P62; Secondary 57N65}
\date{January 1, 1994 and, in revised form, June 22, 1994.}


\keywords{Self-homotopy equivalences; Rational homotopy theory; Inflexible manifolds; Poincar\'e duality}

\begin{abstract}
In this paper we construct an infinite family of homotopically rigid spaces. These examples are then used as building blocks to forge highly connected rational spaces with prescribed finite group of self-homotopy equivalences.  They are also exploited to provide highly connected inflexible and strongly chiral manifolds.  \end{abstract}

\maketitle

\specialsection*{Introduction}
The group of self-homotopy equivalences of a space $X$, $\mathcal E ( X)$,  is rarely trivial.  An obvious example is $X= K ( \mathbb Z/2, n)$ since
$\mathcal E (K ( \mathbb Z/2, n)) \cong \Aut (\mathbb Z/2) \cong \ast$.   The first elaborated example with non trivial rational homology is constructed in \cite{ka1} by D. Kahn. He expressed in \cite{ka3} the belief that spaces with trivial group of self-homotopy equivalences, named  homotopically rigid spaces,  might play a role in some way of decomposing a space. Thus, the problem of determining $X$ for which $\mathcal E ( X)$ is trivial becomes of interest,  \cite[Problem 3]{ka3}.

A decade after,  Arkowitz and Lupton came across an example of a minimal Sullivan algebra (equivalently, a rational homotopy type of a space) $ M$  with $\mathcal E (M)$ trivial, \cite[Example 5.1]{AL2}. This, let us say homotopically rigid algebra, $M$ provides us with motivation and guidance: in our paper \cite{CV2},  we use $M$ as a building block to construct minimal Sullivan algebras, \cite[Definition 2.1]{CV2},  with the property that their group of self-homotopy equivalences are isomorphic to an arbitrary finite group $G$ previously fixed, \cite[Theorem 1.1]{CV2}.  In other words, every finite group $G$ is realizable through minimal Sullivan algebras that are built upon $M$;  this is a partial answer to Kahn's realizability problem and we refer to \cite{CV2} for more details. Observe that as an application, for the trivial group $G \cong \{e \}$, $M$ is then a building block for an infinite family of homotopically rigid minimal Sullivan algebras (equivalently, homotopically rigid rational spaces).
However, as interesting as it may seem, all the examples of the type above just mentioned are $7$-connected since $M$ is $7$-connected itself.

This paper came to light in our attempt to: (i) decide how rare homotopically rigid spaces are, since  very few examples are known in literature. In that sense, we construct an infinite family of such spaces as highly connected as wanted; and (ii) show that building blocks other than Arkowitz-Lupton example $M$ can be chosen in the resolution of Kahn's realizability problem in such a way that we are able to enlarge the family of examples in literature, all of them with low connectivity.

The paper is organized as follows. First, in Section \ref{sec:rigalg} we construct an infinite family of highly connected minimal Sullivan algebras (see Definition \ref{def: rigidalgebras})  and we prove that they are homotopically rigid  (see Theorem \ref{proposition: rigidalgebras}).  Second, in Section \ref{sec:realiz} we show that algebras from   Definition \ref{def: rigidalgebras} serve as building blocks for new highly connected minimal Sullivan algebras (see Definition \ref{def: m_nG})   with the property that their group of self-homotopy equivalences are isomorphic to any finite group $G$ that we fix (see Theorem \ref{teor:sullgraph} and Theorem \ref{cor:sullgraph}).  In other words, we are realizing the group $G$ through minimal Sullivan algebras that can be chosen as highly connected as desired.  Section  \ref{sec: inflex} consists of applications of the previous ones to differential geometry: we enlarge the class of inflexible manifolds existing in literature (see Theorem \ref{thminflexman}), and create new strongly chiral manifolds (see Corollary \ref{cor:chiral}, and Corollary \ref{cor:products}). We end this paper with an Appendix by Pascal Lambrechts and Don Stanley that allows us to guarantee the existence of highly connected inflexible manifolds.

\medskip
\noindent {\bf Acknowledgments.} The authors would like to thank the referee for carefully reading our manuscript and for giving such valuable comments which helped improving the quality of the paper. 


\section{Highly connected homotopically rigid CDGA}\label{sec:rigalg}
In this section an infinite family of highly connected homotopically rigid minimal Sullivan algebras are constructed. They extend the family of examples in literature, all of them of low connectivity.

We refer to \cite{FHT2} for basic facts in rational homotopy theory. Only simply connected $\Q$-algebras of finite type are considered here. If $W$ is a graded rational vector space, we write $\Lambda W$ for the free commutative graded algebra on $W$. This is a symmetric algebra on $W^{\text{even}}$  tensored with an exterior algebra on $W^{\text{odd}}$. A Sullivan algebra is a nilpotent commutative differential graded algebra (CDGA) which is free as commutative graded algebra on a simply connected graded vector space $W$ of finite dimension in each degree. It is minimal if in addition $d(W) \subset \Lambda^{\geq 2}W$.  A Sullivan algebra is pure if  $d= 0$ on $W^{\text{even}}$ and $d (W^{\text{odd}}) \subset \Lambda (W^{\text{even}})$. We recall that the geometric realization functor of Sullivan \cite{Su} gives the equivalence of categories between the homotopy category of minimal Sullivan algebras, and the homotopy category of rational simply connected spaces of finite type. Henceforward,  by abuse of language, we sometimes refer to minimal Sullivan algebras as rational spaces.

\medskip

Recall that the homotopy rigid algebra constructed by Arkowitz and Lupton \cite[Example 5.1]{AL2} is
$$M=\big(\Lambda(x_1,x_2,y_1,y_2,y_3,z),d\big)$$ where dimensions and differential are
$$\begin{array}{ll}
\vert x_1\vert = 8,& dx_1=0\\
\vert x_2\vert = 10,& dx_2=0\\
\vert y_1\vert = 33,& dy_1=x_1^3x_2\\
\vert y_2\vert = 35,& dy_2=x_1^2x_2^2\\
\vert y_3\vert = 37,& dy_3=x_1x_2^3\\
\vert z\vert = 119,& dz=y_1y_2x_1^4x_2^2-y_1y_3x_1^5x_2+y_2y_3x_1^6+x_1^{15}+x_2^{12}.\\
\end{array}$$
Trying to obtain highly connected homotopy rigid algebras by re-scaling the degree of the generators in $M$ is hopeless: the differential in $M$ leads to a system of linear equations, involving the degree of the generators, whose unique solution is the one given by $M$.


We now introduce the main building blocks of this paper.
\begin{definition}\label{def: rigidalgebras}
For any even integer, $k > 4$, we define the minimal Sullivan algebra $${M}_k = \big(\Lambda(x_1,x_2,y_1,y_2,y_3,z),d\big)$$ where
\[\begin{array}{ll}
|x_1| = 5k-2,  &				dx_1=0, \\
|x_2| = 6k-2, &	 				dx_2=0, \\
|y_1| = 21k-9, &				dy_1 = x_1^3 x_2, \\
|y_2| = 22k-9, &				dy_2 = x_1^2 x_2^2, \\
|y_3| = 23k-9, & 				dy_3 = x_1 x_2^3, \\
|z| = 15k^2 - 11k + 1, &			dz = x_1^{3k-12}(x_1^2y_2y_3 - x_1 x_2 y_1 y_3 + x_2^2 y_1 y_2) + x_1^{\frac{6k-2}{2}} + x_2^{\frac{5k-2}{2}}.
\end{array}\]
\end{definition}
\begin{remark}\label{rem:parity}
Observe that since $k$ is even,  only $x_1, x_2$ have even degree; the rest of generators have odd degree.  This is a technicality which proves to be crucial in the proof of  Theorem \ref{proposition: rigidalgebras}, the main result in this section.  \end{remark}

For the sake of clarity, the proof of Theorem \ref{proposition: rigidalgebras} is split into three lemmas. Recall that the algebras $M_k$ are introduced in Definition \ref{def: rigidalgebras}.

\subsection*{Lemmas on generators of $M_k$}
 The first two lemmas measure how the degrees of the generators are, in some sense, far apart.
\begin{lemma}\label{lemma: isolated}
Generators of ${M}_k$ satisfy the following inequalities:
\[|x_1| < |x_2| < |y_1| < |y_2| < |y_3| < |x_1 y_1| < |x_2 y_3| < |z|.\]
\end{lemma}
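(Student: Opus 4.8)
The plan is to verify each of the seven inequalities directly by substituting the degrees from Definition \ref{def: rigidalgebras} and checking the resulting polynomial inequalities in $k$, using throughout the standing hypothesis that $k > 4$ is an even integer (so in fact $k \geq 6$). The first three comparisons are immediate and linear: $|x_1| = 5k-2 < 6k-2 = |x_2|$ reduces to $0 < k$; $|x_2| = 6k-2 < 21k-9 = |y_1|$ reduces to $7 < 15k$; and $|y_1| < |y_2| < |y_3|$ is clear since these three degrees are $21k-9$, $22k-9$, $23k-9$, differing by $k$ each. Similarly $|y_3| = 23k-9 < |x_1y_1| = |x_1| + |y_1| = (5k-2)+(21k-9) = 26k-11$ reduces to $2 < 3k$, and $|x_1y_1| = 26k-11 < |x_2y_3| = |x_2|+|y_3| = (6k-2)+(23k-9) = 29k-11$ reduces to $0 < 3k$.

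The only genuinely substantive step is the last inequality, $|x_2 y_3| < |z|$, that is,
\[
29k - 11 \;<\; 15k^2 - 11k + 1.
\]
This rearranges to $15k^2 - 40k + 12 > 0$, a quadratic in $k$ whose roots are well below $k = 3$; since $k \geq 6$ the inequality holds comfortably (at $k=6$ the left side is $540 - 240 + 12 = 312 > 0$). I would record this computation and note that the quadratic is increasing for $k \geq 2$, so the bound persists for all admissible $k$.

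I do not anticipate any real obstacle here: the lemma is purely a matter of bookkeeping with linear and one quadratic inequality, and the hypothesis $k > 4$ gives ample room. The one point worth stating carefully is why the chain singles out the particular monomials $x_1 y_1$ and $x_2 y_3$ rather than other degree-$(|x_i|+|y_j|)$ products — these are the smallest and largest such products, and the lemma is phrased this way because these are exactly the bounds that will be needed when analyzing which monomials can appear in the image of a self-map in the proof of Theorem \ref{proposition: rigidalgebras}. So after verifying the displayed chain I would simply remark that all other relevant products $x_i y_j$ have degree strictly between $|x_1 y_1|$ and $|x_2 y_3|$, which follows from the already-established ordering $|x_1| < |x_2|$ and $|y_1| < |y_2| < |y_3|$.
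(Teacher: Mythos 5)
Your proof is correct, and for six of the seven inequalities it is essentially the same (indeed unavoidable) bookkeeping. For the last inequality, $|x_2 y_3| < |z|$, you take a different route from the paper: you reduce it to the quadratic $15k^2 - 40k + 12 > 0$ and verify it directly for $k \geq 6$. The paper instead argues structurally: $x_2 y_3$ is a proper factor of the monomial $x_1^{3k-11} x_2 y_1 y_3$ appearing in $dz$, which has total degree $|z|+1$; since $M_k$ has no generators in degree $1$, the complementary factor $x_1^{3k-11} y_1$ has degree at least $2$, whence $|x_2 y_3| \leq |z| - 1 < |z|$. The paper's argument is shorter, avoids any computation with $k$, and makes transparent why the inequality must hold for \emph{any} such Sullivan algebra with this shape of differential; your computation has the advantage of being completely explicit and of recording exactly how much slack the hypothesis $k > 4$ provides (interestingly, the paper's LaTeX source contains your exact computation commented out, so the authors evidently considered it before switching to the structural argument). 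Both are valid; the remark you append about the intermediate products $x_i y_j$ is a helpful gloss on why the lemma isolates $x_1 y_1$ and $x_2 y_3$ in particular, though it is not strictly part of the claim.
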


\begin{proof}
All the inequalities are straightforward except for the last one, which follows from the fact that $x_2 y_3$ is a factor in a term of $dz$, and ${M}_k$ has no generator in degree $1$.
\if
is equivalent to prove that
$0 < 15k^2 - 40k + 12$. This happens whenever
$k \not \in \left[\frac{20-2\sqrt{55}}{15}, \frac{20 + 2\sqrt{55}}{15}\right]$,
which is our case since $\frac{20 + 2\sqrt{55}}{15}<4<k$.
\fi
\end{proof}

\begin{lemma}\label{lemma: divalg}
None of the following integers are divisible by $|x_1| $ or by $|x_2| $:

\begin{table}[H]
\caption{}\label{eqtable1}
\renewcommand\arraystretch{1.5}
\noindent\[
\begin{array}{|l|l|l|}
\hline
|z| - |y_1| &|z| - |y_2|&|z| - |y_3|\\
\hline
|z| - |y_1| - |x_1|-|x_2|&|z| - |y_2| - |x_1|-|x_2|&|z| - |y_3|- |x_1|-|x_2| \\
\hline
\end{array}
\]
\end{table}

\end{lemma}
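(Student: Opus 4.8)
First I would write down explicitly the six integers in terms of $k$, using the degrees from Definition \ref{def: rigidalgebras}. Since $|z| = 15k^2 - 11k + 1$, $|y_1| = 21k-9$, $|y_2| = 22k-9$, $|y_3| = 23k-9$, $|x_1| = 5k-2$, $|x_2| = 6k-2$, the top row gives $|z|-|y_i| = 15k^2 - (11+j)k + 10$ for $j \in \{10,11,12\}$ (i.e. $15k^2-21k+10$, $15k^2-22k+10$, $15k^2-23k+10$), and the bottom row subtracts an extra $|x_1|+|x_2| = 11k-4$, yielding $15k^2 - (22+j)k + 14$ for the same $j$ (i.e. $15k^2-32k+14$, $15k^2-33k+14$, $15k^2-34k+14$). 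The task is then purely arithmetic: show none of these six quadratics in $k$ is divisible by $5k-2$ or by $6k-2$.

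The key device is reduction modulo the linear divisor. To test divisibility by $5k-2$, I would work modulo $5k-2$, where $5k \equiv 2$, hence $25k^2 \equiv 4$, so $25 \cdot(\text{integer}) \pmod{5k-2}$ can be computed by substituting $k^2 \to 4/25$ — more cleanly, multiply the integer by $25$ and reduce: $25(15k^2+bk+c) = 15\cdot 25k^2 + 25bk + 25c \equiv 60 + 5b\cdot 5k + 25c \equiv 60 + 10b + 25c \pmod{5k-2}$. Since $\gcd(25, 5k-2)=1$, divisibility of the original integer by $5k-2$ is equivalent to $5k-2 \mid 60+10b+25c$, a fixed integer. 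One then checks for each of the six values of $(b,c)$ that $|60+10b+25c|$ is either nonzero and too small to be a multiple of $5k-2$ for $k>4$ (note $5k-2 \geq 23$), or — if it happens to vanish — that is impossible here; I expect each $60+10b+25c$ to be a small nonzero constant, ruling out divisibility immediately. The same scheme with multiplier $36$ handles $6k-2$: $36(15k^2+bk+c) \equiv 15\cdot 4 + 6b\cdot 6k + 36c \equiv 60 + 12b + 36c \pmod{6k-2}$, and again $\gcd(36,6k-2)$ is $2$, so one must be slightly more careful — better to use that $6k-2 = 2(3k-1)$ and reduce modulo $3k-1$ with multiplier $9$: $9(15k^2+bk+c) \equiv 15 + 3b\cdot 3k + 9c \equiv 15 + 6b + 9c \pmod{3k-1}$, then separately check the (easy) parity/factor-of-$2$ condition, or simply observe $3k-1$ is odd so $6k-2 \mid N \iff 3k-1 \mid N$ and $2 \mid N$, and $3k-1 \geq 11$ kills the small constants.

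The main obstacle, such as it is, is not conceptual but bookkeeping: one must correctly compute the twelve residues (six integers, two divisors each) and verify that in every case the resulting constant is nonzero and, in absolute value, strictly smaller than the relevant lower bound on the divisor ($5k-2 \geq 23$, $3k-1 \geq 11$ for $k > 4$), so it cannot be a nonzero multiple. There is a mild subtlety if some residue constant happens to be larger than these bounds for small $k$ — then one would have to treat $k=6$ (the smallest even $k>4$) or a few small cases by hand — but given the shape of the coefficients I expect all residues to come out as genuinely small constants (on the order of at most a few hundred, which is still forced nonzero by the explicit value), so a uniform argument in $k$ should suffice. I would present the computation compactly in a table mirroring Table \ref{eqtable1}, listing the reduced constant next to each entry.
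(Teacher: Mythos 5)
Your strategy is essentially the paper's: kill the quadratic term modulo the linear divisor and observe that the surviving constant is too small to be a nonzero multiple. The paper writes $|x_1| = 2n$, expands in $n$, and finds $5f$ has constant term at most $8$ in absolute value, dwarfed by $2n = 5k-2 \geq 28$; similarly $3f$ for $|x_2| = 2n$ with $n = 3k-1 \geq 17$. Your version multiplies by $25$ (resp.\ $9$) rather than $5$ (resp.\ $3$), giving residues that are five (resp.\ three) times as large; this is the same congruence argument with slightly larger constants, so one might have to dispatch $k = 6$ or $8$ by hand, as you anticipated.

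The genuine problem is an arithmetic slip in the setup. With $|z| = 15k^2 - 11k + 1$ and $|y_1| = 21k - 9$, the first entry is $|z| - |y_1| = 15k^2 - 32k + 10$, so the $k$-coefficients in your two rows should be $-32, -33, -34$ and then $-43, -44, -45$ (after subtracting $|x_1|+|x_2| = 11k-4$); you wrote $-21, -22, -23$ and $-32, -33, -34$. There is also a minor slip in the $|x_2|$ reduction: $3b \cdot 3k \equiv 3b$, not $6b$, modulo $3k-1$. Since the residues $60 + 10b + 25c$ and $15 + 3b + 9c$ you go on to tabulate depend directly on $b$ and $c$, the computation as written tests divisibility of the wrong integers and does not prove the lemma. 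With the coefficients corrected, the residues come out to $-10, -20, -30, -20, -30, -40$ for the $|x_1|$ test and $9, 6, 3, 12, 9, 6$ for the $|x_2|$ test (exactly $5\times$ and $3\times$ the paper's), and your plan closes.
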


\begin{proof}
We first show that they are not  divisible by $|x_1|$, which is an even number. Let us write $ |x_1|= 5k-2 = 2n$. Then, for $k = \frac{2n+2}{5}$, {\sc{Table} }\ref{eqtable1} becomes:

\begin{table}[H]
\renewcommand\arraystretch{1.5}
\noindent\[
\begin{array}{|l|l|l|}
\hline
\frac{1}{5}(12n^2 - 40n - 2)& \frac{1}{5}(12n^2 - 42n - 4)& \frac{1}{5}(12n^2 - 44n - 6)\\
\hline
\frac{1}{5}(12n^2 - 62n - 4)& \frac{1}{5}(12n^2 - 64n - 6)& \frac{1}{5}(12n^2 - 66n - 8) \\
\hline
\end{array}
\]
\end{table}
Let $f$ be any of the polynomials in the table above. If $f$ is divisible by $|x_1|=2n$, then $5f$ is so. Every monomial in $5f$ is divisible by $2n$ but the independent term, which is at most $8$ thus not divisible by $2n>18$. Therefore neither of $5f$ and $f$ is divisible by $2n$.

In a similar way we prove that they are not divisible by $|x_2|$, which is also an even number. Let us write $|x_2|=6k-2 =2n$. For  $k = \frac{n+1}{3}$ {\sc{Table}} \ref{eqtable1} becomes:
\begin{table}[H]
\renewcommand\arraystretch{1.5}
\noindent\[
\begin{array}{|l|l|l|}
\hline
\frac{1}{3}(5n^2 - 22n + 3) &	 \frac{1}{3}(5n^2 - 23n + 2)&		 \frac{1}{3}(5n^2 - 24n + 1)\\
 \hline
 \frac{1}{3}(5n^2 - 33n + 4) &  \frac{1}{3}(5n^2 - 34n + 3) &  	  \frac{1}{3}(5n^2 - 35n + 2) \\
 \hline
\end{array}%
\]
\end{table}
Again, let $f$ be any of the polynomials in the table above. If $f$ is divisible by $|x_2|=2n>22$, then $f$ and $3f$ are divisible by $n$. Every monomial in $3f$ is divisible by $n$ but the independent term, which is at most $4$ thus  not divisible by $n>11$. Therefore neither of $3f$ and $f$ is divisible by $n$, and $f$ is not divisible by $2n$.
\end{proof}

The third and final lemma describes some specific elements in $M_k$. By $M_k^s$ we denote the elements of degree $s$.
\begin{lemma}\label{lem: dmzinflex}
Let $A_i \in \mathbb{Q}[x_1,x_2]$ be such that $y_i A_i \in {M}_k^{|z|}$. Then $A_i$ is a multiple of $(x_1 x_2)^2$, for $i=1, 2, 3$. Moreover, if $\sum_{i=1}^3 y_i A_i$ is a cocycle, then it is also a coboundary.
\end{lemma}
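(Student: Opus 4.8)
The plan is to prove the two assertions in turn, the coboundary statement relying on the divisibility statement. \emph{For the divisibility,} observe first that the hypothesis forces each $A_i$ to be homogeneous of degree $N_i := |z| - |y_i|$, so every monomial $x_1^a x_2^b$ occurring in $A_i$ satisfies $a|x_1| + b|x_2| = N_i$; I must show $a \geq 2$ and $b \geq 2$. I argue by contradiction. If $a = 0$ then $|x_2|$ divides $N_i$, against Lemma~\ref{lemma: divalg}. If $a = 1$ and $b = 0$ then $|x_1|$ divides $N_i$, again a contradiction. If $a = 1$ and $b \geq 1$, rearranging $a|x_1| + b|x_2| = N_i$ gives $(b-1)|x_2| = N_i - |x_1| - |x_2|$, so $|x_2|$ divides $N_i - |x_1| - |x_2|$ --- the subcase $b = 1$ is included, since then this quantity is $0$, which Lemma~\ref{lemma: divalg} also excludes --- again contradicting Lemma~\ref{lemma: divalg}. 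Hence $a \geq 2$; interchanging the roles of $x_1$ and $x_2$ (Lemma~\ref{lemma: divalg} is symmetric in them) gives $b \geq 2$. Thus every monomial of $A_i$ is divisible by $x_1^2 x_2^2$, i.e. $A_i$ is a multiple of $(x_1 x_2)^2$.

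\emph{For the coboundary,} write $\omega = \sum_{i=1}^3 y_i A_i$. Since the $A_i$ are cocycles and the $y_i$ have odd degree (Remark~\ref{rem:parity}), $d\omega = \sum_i (dy_i) A_i = x_1 x_2 \big( x_1^2 A_1 + x_1 x_2 A_2 + x_2^2 A_3 \big)$ in $\Lambda(x_1,x_2) = \Q[x_1,x_2]$, an integral domain, so the cocycle condition is equivalent to $x_1^2 A_1 + x_1 x_2 A_2 + x_2^2 A_3 = 0$. From this I read off the shape of $(A_1, A_2, A_3)$: $x_1 \mid x_2^2 A_3$ forces $x_1 \mid A_3$, say $A_3 = x_1 C$; cancelling $x_1$ leaves $x_1 A_1 + x_2 A_2 + x_2^2 C = 0$, so $x_2 \mid A_1$, say $A_1 = x_2 D$; cancelling $x_2$ leaves $A_2 = -x_1 D - x_2 C$. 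Hence $\omega = D(x_2 y_1 - x_1 y_2) + C(x_1 y_3 - x_2 y_2)$. Now the divisibility statement enters: $A_1 = x_2 D$ being a multiple of $(x_1 x_2)^2$ forces $D = x_1^2 x_2 E$ for some $E \in \Q[x_1,x_2]$, and likewise $C = x_1 x_2^2 F$. Using the Leibniz computations $d(y_1 y_2) = x_1^3 x_2 y_2 - x_1^2 x_2^2 y_1$ and $d(y_2 y_3) = x_1^2 x_2^2 y_3 - x_1 x_2^3 y_2$, one checks $x_1^2 x_2(x_2 y_1 - x_1 y_2) = -d(y_1 y_2)$ and $x_1 x_2^2(x_1 y_3 - x_2 y_2) = d(y_2 y_3)$; as $E, F$ are even-degree cocycles, $\omega = -E\, d(y_1 y_2) + F\, d(y_2 y_3) = d\big( F y_2 y_3 - E y_1 y_2 \big)$, a coboundary.

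I expect the crucial point to be the recognition that the divisibility statement is genuinely needed for the coboundary statement: the cocycle $x_2 y_1 - x_1 y_2$ is not itself a coboundary, and it is exactly the surplus factor $x_1^2 x_2$ supplied by the $(x_1 x_2)^2$-divisibility of $A_1$ (equivalently $x_1^2 x_2 \mid D$) that makes $x_1^2 x_2(x_2 y_1 - x_1 y_2)$ an honest coboundary, namely $-d(y_1 y_2)$; the same remark governs the $C$-term. Beyond that, the work is bookkeeping: tracking the odd parities so the signs in the $d(y_i y_j)$ come out as stated, and noticing that the non-divisibility clauses of Lemma~\ref{lemma: divalg} in particular prevent the listed integers from being zero, which is what disposes of the boundary cases $b = 1$ (resp. $a = 1$) in the divisibility argument.
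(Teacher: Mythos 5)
Your proof is correct and follows essentially the same route as the paper's: both parts hinge on the same six entries of Lemma~\ref{lemma: divalg}'s table to obtain $A_i = (x_1x_2)^2\bar A_i$, and both read off the relations among the $A_i$ from the cocycle condition and then assemble the coboundary from $d(y_1y_2)$ and $d(y_2y_3)$. The only differences are presentational --- you bound the exponents $a,b\geq 2$ monomial by monomial rather than peeling off $x_1x_2$ twice, and you factor the cocycle condition before substituting the divisibility rather than after --- but the underlying argument is the one in the paper.
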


\begin{proof}
For $y_i A_i \in {M}_k^{|z|}$, then $ |A_i| = |z| - |y_i| $ which, by Lemma \ref{lemma: divalg},  is not divisible by $|x_1|$ or by $|x_2|$.  In this situation, $A_i$ has not monomials that are pure powers of $x_1$ or $x_2$ and, as a consequence,  $A_i$ is a multiple of $x_1 x_2$.  Resulting from that $A_i = x_1 x_2 B_i$ for some $B_i \in \mathbb{Q}[x_1, x_2]$ with $|B_i| = |z| - |y_i| - |x_1| - |x_2|$ which, again by Lemma \ref{lemma: divalg}, is not divisible  by $|x_1|$ or by $|x_2|$. The same argument as above shows that $B_i$ is a multiple of $x_1 x_2$, so $B_i = x_1 x_2 C_i$ for some $C_i \in \mathbb{Q}[x_1, x_2]$. Then, $A_i = (x_1 x_2)^2 C_i$.

Moreover, if $\Sigma_{i=1}^3 y_i A_i $ is a cocycle,
$d(\Sigma_{i=1}^3 y_i A_i) = x_1^3 x_2 A_1 + x_1^2 x_2^2 A_2 + x_1 x_2^3 A_3 = (x_1 x_2)^3 (x_1^2 C_1 + x_1 x_2 C_2 + x_2^2 C_3)= 0$
leading to $x_1^2 C_1 + x_1 x_2 C_2 + x_2^2 C_3 = 0$. This equality forces $C_3$ and $C_1$ to be multiples of $x_1$ and $x_2$ respectively. Then,
$C_1 = x_2 \bar{C}_1$ and $C_3 = x_1 \bar{C}_3$
for some $\bar{C}_1, \bar{C}_3 \in \mathbb{Q}[x_1,x_2]$ and the equality above becomes $x_1 x_2 (x_1 \bar{C}_1 + C_2 + x_2\bar{C}_3)=0. $
Therefore $C_2 = -x_1 \bar{C}_1 - x_2 \bar{C}_3$ and
\begin{align*}
 y_1 A_1 + y_2 A_2 + y_3 A_3 &=y_1 (x_1 x_2)^2 C_1 + y_2(x_1 x_2)^2  C_2 + y_3 (x_1 x_2) ^2 C_3 \\
& = y_1(x_1 x_2)^2 x_2 \bar{C}_1 + y_2(x_1 x_2)^2  (-x_1\bar{C}_1 - x_2 \bar{C}_3)+ y_3(x_1 x_2)^2 x_1 \bar{C}_3 \\
& = \bar{C}_1 x_2 (y_1 x_1^2 x_2^2 - y_2 x_1^3 x_2) + \bar{C}_3 x_1(y_3 x_1^2 x_2^2 - y_2 x_1 x_2^3) \\
& = \bar{C}_1 x_2 d(y_2 y_1) + \bar{C}_3 x_1 d(y_2 y_3)\\
& = d(\bar{C}_1 x_2 y_2 y_1 + \bar{C}_3 x_1 y_2 y_3),
\end{align*}
which exhibits $\sum_{i=1}^3 y_i A_i$ as a coboundary.
\end{proof}

\subsection*{Main theorem in Section  \ref{sec:rigalg}}
We have all the ingredients to present and prove the main result in this section. Recall that $M_k$, for any even $k >4$, are the $(5k-3)$-connected algebras introduced in Definition \ref{def: rigidalgebras}. The following result directly implies that $M_k$ are homotopically rigid algebras:
\begin{theorem}\label{proposition: rigidalgebras}
The monoid of the homotopy classes of self-maps of $M_k$ has only two elements: the zero map and the identity.
\end{theorem}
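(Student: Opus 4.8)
### Proof proposal

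The plan is to take an arbitrary self-map $\varphi\colon M_k\to M_k$ of minimal Sullivan algebras, analyse what it must do on generators using the degree constraints from Lemmas~\ref{lemma: isolated} and~\ref{lemma: divalg}, and show that after composing with an automorphism (which in the end will also turn out to be the identity) $\varphi$ is forced to be either zero or the identity. Since $M_k$ is minimal, $\varphi$ is determined up to homotopy by its effect on the generating space $W=\langle x_1,x_2,y_1,y_2,y_3,z\rangle$, so the whole argument is a finite bookkeeping on six generators organised by increasing degree.

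First I would handle $x_1$ and $x_2$. By Lemma~\ref{lemma: isolated} these are the two lowest-degree generators and they have distinct (even) degrees, and $\Lambda W$ has no other generators below $|y_1|$; hence $\varphi(x_1)=\alpha x_1$ and $\varphi(x_2)=\beta x_2$ for scalars $\alpha,\beta\in\Q$ — here Remark~\ref{rem:parity} is used, since in odd degrees decomposables could a priori interfere, but in the even degrees $|x_1|,|x_2|$ the only elements are scalar multiples of $x_1,x_2$ respectively. Next, for $y_1,y_2,y_3$: in degree $|y_i|$ the possible elements of $\Lambda W$ are $c_i y_i$ plus polynomials in $x_1,x_2$; applying $d\varphi=\varphi d$ to $y_i$ and using that $\varphi$ is a cochain map, the polynomial part is killed on comparing with $dy_i=x_1^{?}x_2^{?}$, and one gets $\varphi(y_1)=\alpha^3\beta\, y_1+(\text{closed poly})$ etc. — more precisely $\varphi(dy_1)=\alpha^3\beta\, x_1^3x_2$ must equal $d\varphi(y_1)$, forcing the $y_1$-coefficient to be $\alpha^3\beta$, and similarly coefficients $\alpha^2\beta^2$ for $y_2$ and $\alpha\beta^3$ for $y_3$ (any pure-polynomial summand of $\varphi(y_i)$ is automatically closed and can be absorbed, or ruled out by Lemma~\ref{lemma: divalg} since $|y_i|$ is not a relevant multiple — I would check the cleanest formulation here).

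The crux is the generator $z$. Write $\varphi(z)=\lambda z + u$ where $u\in\Lambda W$ has degree $|z|$ and no $z$-term; by Lemma~\ref{lemma: isolated}, $|z|$ exceeds all products of two generators appearing, so $u$ is a sum of a polynomial in $x_1,x_2$ and terms of the form $y_iA_i$ with $A_i\in\Q[x_1,x_2]$ (and products $y_iy_j(\dots)$, which I would dispatch first by degree/divisibility or fold into the $d$-computation). Now compute $d\varphi(z)=\varphi(dz)$. The left side is $du$; the right side is obtained by substituting $x_i\mapsto\alpha^{\,\cdot}x_i$, $y_i\mapsto\alpha^{\,\cdot}\beta^{\,\cdot}y_i(+\text{poly})$ into $dz$, yielding
\[
\lambda\big(x_1^{3k-12}(x_1^2y_2y_3-x_1x_2y_1y_3+x_2^2y_1y_2)+x_1^{(6k-2)/2}+x_2^{(5k-2)/2}\big)
= \varphi(dz).
\]
Comparing the pure-polynomial parts $\lambda x_1^{3k-1}+\lambda x_2^{(5k-2)/2}$ against the image of the polynomial part of $dz$ under $\varphi$ (which scales $x_1^{(6k-2)/2}$ by $\alpha^{(6k-2)/2}$ and $x_2^{(5k-2)/2}$ by $\beta^{(5k-2)/2}$), and matching the mixed $y_iy_j$-terms (scaled by the appropriate product of $\alpha,\beta$), forces a system of polynomial equations in $\lambda,\alpha,\beta$ whose only solutions are $(\lambda,\alpha,\beta)=(0,0,0)$ and $(1,1,1)$. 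This is exactly the rigidity of the exponent system that the paper alludes to after stating Definition~\ref{def: rigidalgebras}. Finally, Lemma~\ref{lem: dmzinflex} is invoked to dispose of the remaining freedom: any leftover $\sum y_iA_i$ appearing in $u$ that is forced to be a cocycle is in fact a coboundary, hence can be removed by changing $\varphi$ within its homotopy class, so $\varphi\simeq 0$ or $\varphi\simeq\id$.

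The main obstacle I expect is the $z$-computation: bounding and classifying all monomials of degree $|z|$ in $\Lambda W$ (ruling out $y_iy_jy_\ell$-, $y_iy_j$-polynomial-, and stray $x$-power contributions by Lemmas~\ref{lemma: isolated}–\ref{lemma: divalg}), and then verifying that the resulting system in $\alpha,\beta,\lambda$ genuinely has only the trivial and identity solutions for every even $k>4$ — this is where the specific exponents $3k-12$, $\tfrac{6k-2}{2}$, $\tfrac{5k-2}{2}$ and the structure of the quadratic term in $dz$ must be used essentially, rather than just formally. The handling of homotopies (that two cochain maps agreeing up to a coboundary on $z$ are homotopic, so that Lemma~\ref{lem: dmzinflex} really lets us normalise) is routine minimal-model theory but should be stated carefully.
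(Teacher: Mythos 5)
Your proposal follows essentially the same route as the paper's proof: pin down $\varphi(x_i)$ and $\varphi(y_j)$ by degree and the cochain-map condition, expand $\varphi(z)$, compare $d\varphi(z)$ with $\varphi(dz)$ to extract a polynomial system in the coefficients, and invoke Lemma~\ref{lem: dmzinflex} to absorb the residual $\sum_i y_iA_i$ into a coboundary and hence into a homotopy. Two of the loose ends you flag are resolved more cleanly than you suggest: there is no ``closed poly'' correction in $\varphi(y_i)$ at all, because $|y_i|$ is odd while $\Q[x_1,x_2]$ sits in even degrees (this is the role of Remark~\ref{rem:parity}, not Lemma~\ref{lemma: divalg}), and the one remaining decomposable summand $B\,y_1y_2y_3$ of $\varphi(z)$ is killed by observing that $d\varphi(z)$ would then contain $Bx_1^3x_2\,y_2y_3$ while $\varphi(dz)$ has no term divisible by $x_2y_2y_3$. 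The step you correctly identify as the crux --- checking that the resulting system $c=a_1^{3k-7}a_2^5=a_1^{3k-1}=a_2^{(5k-2)/2}$ forces $(a_1,a_2,c)\in\{(0,0,0),(1,1,1)\}$ --- is carried out explicitly in the paper (it uses $a_1^6=a_2^5$ together with $a_1^{3k-1}=a_2^{(5k-2)/2}$, and the parity of $k$ is needed to rule out a sign), but you leave it asserted rather than verified; that computation is the actual content of the rigidity claim.
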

\begin{proof}
Let $f\in\operatorname{End}({M}_k)$. By degree reasons, following Lemma~\ref{lemma: isolated} we know that
$f(x_i) = a_i x_i, \,  f(y_j) = b_j y_j, $  and
$f(z) = cz + \Sigma_{i=1}^3 A_i y_i + B y_1 y_2 y_3$,  $a_i,    b_j,  c \in \Q$,  $A_1, A_2, A_3, B \in \mathbb{Q}[x_1,x_2]$.
We point out here that since $f(z)$ is of odd degree, no summand from $\Lambda (x_1, x_2 ) \otimes \Lambda^2(y_1, y_2, y_3)$ can  be part of it. 

While $df(y_i) = f(dy_i)$ allow us to immediately obtain the following relations
\begin{equation}\label{eq: factoresxiyi}
b_1 = a_1^3 a_2, \quad b_2 = a_1^2 a_2^2 \quad \text{and} \quad b_3 = a_1 a_2^3,
\end{equation}
comparing $f(dz)= df(z)$ is more demanding. On one hand we have that
\begin{multline}\label{eq: fdzalgrig}
f(dz) = a_1^{3k-12} x_1^{3k-12}(a_1^2 b_2 b_3 x_1^2 y_2 y_3 - a_1 a_2 b_1 b_3 x_1 x_2 y_1 y_3 + a_2^2 b_1 b_2 x_2^2 y_1 y_2) \\
+ a_1^{\frac{6k-2}{2}}x_1^{\frac{6k-2}{2}} + a_2^{\frac{5k-2}{2}}x_2^{\frac{5k-2}{2}},
\end{multline}
and on the other hand
\begin{multline}\label{eq: dfzalgrig}
df(z) = c\left[x_1^{3k-12}(x_1^2y_2y_3 - x_1 x_2 y_1 y_3 + x_2^2 y_1 y_2) + x_1^{\frac{6k-2}{2}} + x_2^{\frac{5k-2}{2}}\right] \\
+ x_1^3 x_2 A_1 + x_1^2 x_2^2 A_2 + x_1 x_2^3 A_3 + B(x_1^3 x_2 y_2 y_3 - x_1^2 x_2^2 y_1 y_3 + x_1 x_2^3 y_1 y_2).
\end{multline}
Note that none of the summands in \eqref{eq: fdzalgrig} is a multiple of $x_2 y_2 y_3$, whereas $Bx_1^3x_2 y_2 y_3$ appears as a summand in \eqref{eq: dfzalgrig}. This forces $B=0$. Also, if we compare monomials of the form $x_1 x_2 P$, with $P\in \mathbb{Q}[x_1,x_2]$,  in \eqref{eq: fdzalgrig} and \eqref{eq: dfzalgrig}, we are led to the conclusion that $x_1^3 x_2 A_1 + x_1^2 x_2^2 A_2 + x_1 x_2^3 A_3 = 0$. This implies that $\sum_{i=1}^3 y_i A_i$ is a cocycle and, by Lemma \ref{lem: dmzinflex},  a coboundary. Following the strategy of comparing the rest of the coefficients in both equations and considering  \eqref{eq: factoresxiyi} we get
\begin{equation}\label{eq: factorescaibi}
\left\{\begin{array}{l}
c = a_1^{3k-10}  b_2 b_3 = a_1^{3k-7} a_2^5, \\
c = a_1^{3k-11} a_2 b_1 b_3 = a_1^{3k-7} a_2^5, \\
c = a_1^{3k-12} a_2^2 b_1 b_2 = a_1^{3k-7} a_2^5, \\
c = a_1^{3k-1}, \\
c = a_2^{\frac{5k-2}{2}}.
\end{array}\right.
\end{equation}
When $a_1 = 0$, it is easy to obtain that $b_1 = b_2 = b_3 = a_1 = a_2 = c = 0$.  In other case, as $a_1^{3k-1} = a_2^{\frac{5k-2}{2}}$ and also $a_1^{3k-1} =  a_1^{3k-7} a_2^5$,  we get that $a_1^6 = a_2^5$, or equivalently, $a_1^3 = a_2^{\frac{5}{2}}$. Combining both identities, we have that $a_1^{3k-1} = a_1^{3k-2} a_2$.
Therefore  $a_1 = a_2$, and since $a_1^6 = a_2^5$, we deduce that $a_1 = a_2 = 1$.   So, when $a_1 \neq 0$,  we infer that  $a_1 = a_2 = b_1 = b_2 = b_3 = c = 1$.

Summarizing all the steps, we have proved that for $f\in \operatorname{End}({M}_k)$, there exists $s=0,1,$ such that
$f(x_i) =  s x_i, \,f(y_j) = s y_j,  \, f(z) = s z + d(m_z),\, m_z \in M^{|z|-1}.$   From there,  it is only necessary to observe that $f (z) = sz + d(m_z)$ is homotopically equivalent to $\tilde f (z) = s z$ and we have succeeded in proving that the monoid of the homotopy classes of self-maps  of $M_k$ has only two elements: the zero map (for $s=0$), and the identity  (for $s=1$). \end{proof}

\section{Realizing finite groups by highly connected CDGA}\label{sec:realiz}
The purpose of this section is to prove that algebras from Definition \ref{def: rigidalgebras} serve as building blocks in the resolution of Kahn's realizability problem.  Recall that Kahn's problem asks, for an arbitrary group $G$,  if  $G \cong \mathcal E (X)$ for some space $X$; if so, $G$ is said to be realized by $X$.  We argue as in \cite[\S 2]{CV2}: for any finite group $G$, by Frucht's theorem \cite{Frucht1},  there exists a finite, connected and simple graph $\mathcal G = (V (\mathcal G ), E (\mathcal G))$ whose automorphism group, $\rm{Aut} (\mathcal G)$, is isomorphic to $G$. The landscape changes now from groups to graphs, and our approach consists on realizing the group $\Aut(\mathcal G)$ by the following minimal Sullivan algebra:
\begin{definition}\label{def: m_nG}
Let $n\ge 1$ be an integer, and $\G$ be a finite, connected and simple graph with more than one vertex. We define $${M}_n(\mathcal{G})=\big(\Lambda (x_1,x_2,y_1,y_2,y_3,z)\otimes \Lambda\big(x_v,z_v\mid v\in V(\mathcal{G})\big),d\big)$$ where
\[\begin{array}{ll}
|x_1| = 30n+18,  &				dx_1=0, \\
|x_2| = 36n+22, & 				dx_2=0, \\
|y_1| = 126n+75, &				dy_1 = x_1^3 x_2, \\
|y_2| = 132n+79, &				dy_2 = x_1^2 x_2^2, \\
|y_3| = 138n+83, & 				dy_3 = x_1 x_2^3, \\
|x_v| = 180n^2 +218n+66, & 		dx_v = 0, \\
|z| = 540n^2 + 654n + 197, &		dz = x_1^{18n}(x_1^2y_2y_3 - x_1 x_2 y_1 y_3 + x_2^2 y_1 y_2), \\
& 							\hspace{16pt}+  x_1^{18n+11} + x_2^{15n+9} \\
|z_v| = 540n^2 + 654n + 197, &	dz_v = x_v^3 + \sum_{(v,w)\in E(\G)} x_v x_w x_2^{5n+3}.
\end{array}\]
\end{definition}

\begin{remark}\label{rmk:como es M(G)}
Observe that $M_n (\mathcal G)$ is described as a tensorial product where the left factor is the minimal Sullivan algebra $M_{6n+4}$ from Definition \ref{def: rigidalgebras}, and the right factor codifies $\mathcal G$. Indeed, for every vertex $v\in V({\mathcal G})$, $v$ is represented by the generator $x_v$, while $dz_v$ codifies the neighborhood of $v$, that is, the edges in $E (\mathcal G)$ that are incident to the vertex $v$. Finally, notice that we are considering simple non oriented graphs, i.e.\ $E (\mathcal G)$ consists of non ordered pairs of vertices, and therefore the pairs $(v,w)$ and $(w,v)$ represent the same element in $E ({\mathcal G})$. 
\end{remark}


\subsection*{Lemmas on generators of  $M_n (\mathcal G)$}\label{sub:gen}
We prove now three technical lemmas that are needed for the main result in this section, Theorem \ref{teor:sullgraph}. 	

\begin{lemma}\label{lemma:divalg2}
Let us consider
$r_1= |z| - |y_1| $, $r_2= |z| - |y_2|$ and $r_3= |z| - |y_3|$.  The following integers are not divisible by either $|x_1|$ or $|x_2|$:
\begin{table}[ht]
\caption{}\label{eqtable2}
\renewcommand\arraystretch{1.5}
\noindent\[
\begin{array}{|l|l|l|}
\hline
r_1&r_2&r_3\\
\hline
r_1 - |x_1|-|x_2|&r_2 - |x_1|-|x_2|&r_3- |x_1|-|x_2| \\
\hline
r_1-|x_v|&r_2-|x_v|&r_3-|x_v|\\
\hline
r_1-|x_v|- |x_1|-|x_2|	& r_2-|x_v|- |x_1|-|x_2|	& r_3-|x_v|- |x_1|-|x_2|\\
\hline
r_1-2|x_v|  	& r_2-2|x_v|  	&r_3-2|x_v|\\
\hline
r_1-2|x_v|-|x_1|-|x_2|	 & r_2-2|x_v|-|x_1|-|x_2|	&r_3-2|x_v|-|x_1|-|x_2| \\
\hline
\end{array}
\]
\end{table}
\end{lemma}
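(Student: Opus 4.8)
The plan is to reduce everything to the same kind of divisibility argument used in Lemma \ref{lemma: divalg}: substitute the explicit degrees of the generators (all of which are polynomials in $n$ with integer coefficients) into each entry of {\sc Table} \ref{eqtable2}, obtaining a list of eighteen quadratic polynomials in $n$. For divisibility by $|x_1| = 30n+18 = 6(5n+3)$ I would clear a suitable constant factor and observe that after reduction modulo the linear polynomial $5n+3$ (equivalently, after the substitution $n \equiv -3/5$), each candidate polynomial becomes a fixed rational number whose numerator, once the constant denominator is removed, is too small in absolute value to be a multiple of $30n+18$ for any admissible $n \ge 1$. The same scheme handles divisibility by $|x_2| = 36n+22 = 2(18n+11)$: reduce modulo $18n+11$, i.e.\ substitute $n \equiv -11/18$, and check that the resulting constant is a bounded nonzero integer, hence not divisible by $36n+22$ once $n$ is large enough; the finitely many small values of $n$, if any, are checked by hand.

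More precisely, the key steps in order are: (1) write out $r_i = |z| - |y_i|$ explicitly, so $r_1 = 540n^2 + 528n + 122$, $r_2 = 540n^2 + 522n + 118$, $r_3 = 540n^2 + 516n + 114$, and likewise expand $r_i - |x_1| - |x_2|$, $r_i - |x_v|$, $r_i - 2|x_v|$, and the versions with $|x_1|+|x_2|$ also subtracted; (2) for the $|x_1|$-divisibility, note $|x_1| = 30n+18$ is even and of the form $2m$ with $m = 15n+9$, and run the Lemma \ref{lemma: divalg} argument: if such a polynomial $f$ is divisible by $2m$ then $5f$ (or whatever constant clears denominators) is divisible by $2m$; every monomial of $5f$ except the constant term is manifestly divisible by $2m$; and the constant term is bounded in absolute value by something far smaller than $2m$ for $n \ge 1$, giving a contradiction; (3) repeat verbatim for $|x_2| = 36n+22 = 2(18n+11)$, using divisibility by $18n+11$ and a correspondingly small constant term. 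I would present this as two parallel paragraphs, one per divisor, each displaying the reduced table of polynomials exactly as in the proof of Lemma \ref{lemma: divalg}.

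The only genuine obstacle is bookkeeping: there are eighteen entries rather than six, $|x_v| = 180n^2 + 218n + 66$ is itself quadratic so subtracting it (or $2|x_v|$) changes the leading coefficient of the quadratic in $n$ and must be tracked carefully, and one has to make sure that after the linear substitution the leftover constant is genuinely nonzero — a vanishing constant term would mean the reduction argument fails and the entry would actually need a separate treatment. I expect all eighteen constants to be small nonzero integers (bounded by a two-digit number), so the argument goes through uniformly for all even-or-arbitrary $n \ge 1$, but verifying nonvanishing and the size bound for each of the eighteen is where the real work lies. A secondary point to be careful about is that $|x_v|$ and $|z|$ share no common factor surprises with $|x_1|$, $|x_2|$: since $|x_v| \equiv$ (a nonzero constant) modulo $5n+3$ and modulo $18n+11$, subtracting multiples of $|x_v|$ only shifts the constant term by bounded amounts, which keeps us inside the same "small nonzero constant" regime. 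Once that is confirmed, the proof is complete.
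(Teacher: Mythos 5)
Your proposal is correct and matches the paper's strategy: expand each entry as a quadratic in $n$, change variables so that $|x_1|$ (respectively $|x_2|$) is a simple expression in the new parameter, clear the resulting constant denominator, and observe that modulo $|x_1|$ (resp.\ $|x_2|$) only the independent term survives and is too small to be a multiple. The one thing you miss is a shortcut the paper uses: the top two rows of {\sc Table}~\ref{eqtable2} are exactly the entries of {\sc Table}~\ref{eqtable1} from Lemma~\ref{lemma: divalg} under the substitution $k = 6n+4$, so only the four bottom rows need fresh computation; the rest of your bookkeeping, change of variables ($|x_1|=2m$ with $m=15n+9$, and $|x_2|=2m$ with $m=18n+11$), and the resulting constant-clearing multipliers (15 and 9 in the paper, not 5) all come out as you expected, with the leftover constants indeed small nonzero integers.
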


\begin{proof}

Nothing is to prove for the two top rows, it follows from Lemma \ref{lemma: divalg} by taking $k=6n+4$. For the rest, we start by showing that they are not divisible by $|x_1|=30n+18$, which is an even number. Let us write $|x_1|=2m$, and $n=\frac{m-9}{15}$.  Then, the four bottom rows become:
	\begin{table}[H]
\renewcommand\arraystretch{1.5}
\noindent\[
\begin{array}{|l|l|l|}
\hline
	\frac{1}{15}(24m^2-122m-6)		& \frac{1}{15}(24m^2-128m-12)	& \frac{1}{15}(24m^2 - 134m - 18)\\
	\hline
	\frac{1}{15}(24m^2 -188m -12)	& \frac{1}{15}(24m^2 - 194m - 18)	& \frac{1}{15}(24m^2 - 200m - 24) \\
	\hline	\frac{1}{15}(12m^2 - 124m - 6)	& \frac{1}{15}(12m^2-130m-12)	& \frac{1}{15}(12m^2 - 136m - 18)\\
	\hline
		\frac{1}{15}(12m^2 -190m - 12)	& \frac{1}{15}(12m^2 - 196m - 18)	& \frac{1}{15}(12m^2 -202m -24)\\
\hline
\end{array}
\]
\end{table}
Let $f$ be any of the polynomials in the table above: if $f$ is divisible by $|x_1|=2m$, then $15f$ is so. Every monomial in $15f$ is divisible by $2m$ but the independent term, which is at most $24$ thus not divisible by $|x_1|=2m=30n+18\ge 48$. Therefore neither of $15f$ nor $f$ is divisible by $|x_1|$.

Similar arguments show that none of the integers from the table is divisible by $|x_2|=36n+22$, which is also an even number. Let us write $|x_2| = 2m$,  and $n=\frac{m-11}{18}$. Then, the four bottom rows in {\sc{Table}} \ref{eqtable2} become:

		\begin{table}[H]
\renewcommand\arraystretch{1.5}
\noindent\[
\begin{array}{|l|l|l|}
\hline
	\frac{1}{9}(10m^2 - 65m + 9)	& \frac{1}{9}(10m^2 - 68m + 6)	& \frac{1}{9}(10m^2 - 71m + 3) \\
	\hline
	\frac{1}{9}(10m^2 - 98m + 12)	& \frac{1}{9}(10m^2 - 101m + 9)	& \frac{1}{9}(10m^2 - 104m + 6)\\
	\hline
	\frac{1}{9}(5m^2 - 64m + 9)		& \frac{1}{9}(5m^2 - 67m + 6)	& \frac{1}{9}(5m^2 - 70m + 3) \\
	\hline
	\frac{1}{9}(5m^2 - 97m + 12)	& \frac{1}{9}(5m^2 - 100m + 9)	& \frac{1}{9}(5m^2 - 103m + 6)\\
\hline
\end{array}
\]
\end{table}
Again, let $f$ be any of the polynomials in the table above: if $f$ is divisible by $|x_2|=2m$, then $f$ and $9f$ are divisible by $m$. Every monomial in $9f$ is divisible by $m$ but the independent term, which is at most $12$ thus  not divisible by $m=18n+11 \ge 29$. Hence neither of $9f$ nor $f$ is divisible by $m$.
\end{proof}

The description of some specific elements in the algebra ${M}_n(\mathcal{G})$ is also needed:

\begin{lemma}\label{prop:dmz}
Let $i=1,2,3$ and $A_i\in\mathbb{Q}[x_1,x_2]$. If $y_i A_i \in \big({M}_n(\mathcal{G})\big)^m$,  for $m= |z|$, $|z|-|x_v|$, or $|z|-2|x_v|$, then $A_i=(x_1x_2)^2 \bar{A}_i$, where $\bar{A}_i\in\mathbb{Q}[x_1,x_2]$. Moreover, if $\sum_{i=1}^3 y_i A_i$ is a cocycle, then it is a coboundary.
\end{lemma}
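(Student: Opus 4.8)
The plan is to follow the proof of Lemma~\ref{lem: dmzinflex} almost verbatim, with Lemma~\ref{lemma:divalg2} replacing Lemma~\ref{lemma: divalg} in the divisibility bookkeeping. The point I would stress at the outset is that $\Lambda(x_1,x_2,y_1,y_2,y_3)$, with the differential of Definition~\ref{def: m_nG}, is a sub-CDGA of $M_n(\mathcal{G})$ (it sits inside the tensor factor $M_{6n+4}$ of Remark~\ref{rmk:como es M(G)}), and this differential is exactly the one of $M_{6n+4}$. Since every $y_iA_i$ with $A_i\in\mathbb{Q}[x_1,x_2]$, as well as the primitive I shall exhibit, lies in this sub-CDGA, and its differential stays there, the whole argument is insensitive to the remaining generators $x_v,z,z_v$.

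For the factorization I would fix $i$ and note that $y_iA_i\in\bigl(M_n(\mathcal{G})\bigr)^m$ forces $|A_i|=m-|y_i|$, which for $m=|z|$, $m=|z|-|x_v|$ and $m=|z|-2|x_v|$ equals, respectively, $r_i$, $r_i-|x_v|$ and $r_i-2|x_v|$ in the notation of Lemma~\ref{lemma:divalg2}; in each case it is divisible by neither $|x_1|$ nor $|x_2|$. Hence $A_i$ has no monomial that is a pure power of $x_1$ or of $x_2$, so $x_1x_2\mid A_i$; write $A_i=x_1x_2B_i$. Then $|B_i|=m-|y_i|-|x_1|-|x_2|$ equals one of $r_i-|x_1|-|x_2|$, $r_i-|x_v|-|x_1|-|x_2|$, $r_i-2|x_v|-|x_1|-|x_2|$, again not divisible by $|x_1|$ or $|x_2|$ by Lemma~\ref{lemma:divalg2}; repeating the argument gives $x_1x_2\mid B_i$, and therefore $A_i=(x_1x_2)^2\bar{A}_i$ with $\bar{A}_i\in\mathbb{Q}[x_1,x_2]$.

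For the last assertion I would argue exactly as in Lemma~\ref{lem: dmzinflex}. Assuming $\sum_{i=1}^3 y_iA_i$ is a cocycle, we get
\[
d\Bigl(\sum_{i=1}^3 y_iA_i\Bigr)=x_1^3x_2A_1+x_1^2x_2^2A_2+x_1x_2^3A_3=(x_1x_2)^3\bigl(x_1^2\bar{A}_1+x_1x_2\bar{A}_2+x_2^2\bar{A}_3\bigr)=0,
\]
so $x_1^2\bar{A}_1+x_1x_2\bar{A}_2+x_2^2\bar{A}_3=0$ in the domain $\mathbb{Q}[x_1,x_2]$. Reducing modulo $x_1$ and modulo $x_2$ forces $x_1\mid\bar{A}_3$ and $x_2\mid\bar{A}_1$; writing $\bar{A}_1=x_2\hat{A}_1$, $\bar{A}_3=x_1\hat{A}_3$ and cancelling $x_1x_2$ yields $\bar{A}_2=-x_1\hat{A}_1-x_2\hat{A}_3$. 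Substituting back and collecting terms, precisely as in the displayed computation of that lemma, I would obtain
\[
\sum_{i=1}^3 y_iA_i=\hat{A}_1\,x_2\,d(y_2y_1)+\hat{A}_3\,x_1\,d(y_2y_3)=d\bigl(\hat{A}_1\,x_2\,y_2y_1+\hat{A}_3\,x_1\,y_2y_3\bigr),
\]
an element of $M_n(\mathcal{G})$.

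The only real work here is bookkeeping: one must verify that the six degrees $m-|y_i|$ and $m-|y_i|-|x_1|-|x_2|$, over the three admissible values of $m$, are exactly the twelve entries in the four bottom rows of Table~\ref{eqtable2} (the top two rows being covered by Lemma~\ref{lemma: divalg} with $k=6n+4$), after which non-divisibility is immediate. I do not expect any genuine obstacle beyond this, since the manipulation in the previous paragraph takes place entirely inside $\mathbb{Q}[x_1,x_2]$ and the passage from $M_{6n+4}$ to $M_n(\mathcal{G})$ touches neither the hypothesis nor the conclusion.
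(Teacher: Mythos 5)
Your proof is correct and follows the paper's argument essentially verbatim: the factorization $A_i=(x_1x_2)^2\bar{A}_i$ via Lemma~\ref{lemma:divalg2} with exactly the same row-by-row degree bookkeeping in Table~\ref{eqtable2}, and the cocycle-implies-coboundary step by the same primitive $d\bigl(\hat{A}_1x_2y_2y_1+\hat{A}_3x_1y_2y_3\bigr)$ used in Lemma~\ref{lem: dmzinflex} (whose details the paper omits and you supply). Your opening remark that the whole computation lives in the sub-CDGA $\Lambda(x_1,x_2,y_1,y_2,y_3)\subset M_n(\mathcal{G})$ is a small but useful clarification of why the $M_k$ argument transfers unchanged.
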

\begin{proof}
If $y_iA_i \in \big({M}_n (\mathcal{G})\big)^{m}$, then $|A_i| = m-|y_i|$. When $m=|z|$, $|z|-|x_v|$,  or $|z|-2|x_v|$, $|A_i|$ is respectively the $i$-th term of the first, third or fifth row in {\sc{Table}}  \ref{eqtable2}.  By Lemma \ref{lemma:divalg2},  as $|A_i|$ is not divisible by neither $|x_1|$ nor $|x_2|$, we infer that $A_i$ is not a pure power of $x_1$ nor $x_2$, so $A_i = x_1x_2 B_i$, where $B_i\in\mathbb{Q}[x_1,x_2]$. Now,  for $m=|z|$, $|z|-|x_v|$,  or $|z|-2|x_v|$,  $|B_i|$ is, respectively, the $i$-th term of the second, fourth or sixth row in {\sc{Table}}  \ref{eqtable2}. Hence $|B_i|$ is not divisible by neither $|x_1|$ nor $|x_2|$,  and the same arguments allow us to write $B_i=x_1x_2 \bar{A}_i$. Thus, we finally obtain that $A_i = (x_1 x_2)^2 \bar{A}_i$, where $\bar{A}_i\in \mathbb{Q}[x_1, x_2]$.
	
Similar arguments as in the proof of Lemma \ref{lem: dmzinflex} show that if $\sum_{i=1}^3 y_iA_i$ is a cocycle then it is a coboundary; details are omitted. \end{proof}

The last technical result that we need is the following.

\begin{lemma}\label{lem:dioph}
There is no pair of positive integers $(\alpha,\beta)$ such that
	\begin{equation} \label{eq:dioph} m = \alpha |x_1| + \beta |x_2|,\end{equation}
	for $m = |x_v|$. Moreover,
	there is no pair of non-negative integers $(\alpha,\beta)$ verifying \eqref{eq:dioph}  for $m \in \{ |z|-|y_1y_2y_3|,  |z|-|x_v y_1 y_2 y_3|,  |z|-|x_v^2 y_1 y_2 y_3|\}$.
\end{lemma}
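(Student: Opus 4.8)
The plan is to turn each non-divisibility/non-representability assertion into an elementary statement about a linear (or, after substitution, quadratic) Diophantine expression and then rule it out by a congruence or size argument, exactly in the style of Lemma~\ref{lemma: divalg} and Lemma~\ref{lemma:divalg2}. Concretely, recall from Remark~\ref{rmk:como es M(G)} that $M_n(\mathcal G)$ has $M_{6n+4}$ as a tensor factor, so with $k=6n+4$ the degrees $|x_1|,|x_2|,|y_i|,|z|$ here are just the degrees in $M_k$ re-expressed in $n$; this lets me reuse the computations already done. First I would record the explicit values: $|x_1|=30n+18$, $|x_2|=36n+22$, $|x_v|=180n^2+218n+66$, and $|z|=540n^2+654n+197$, and compute the four relevant target degrees: $|z|-|y_1y_2y_3| = |z|-|y_1|-|y_2|-|y_3|$, then subtract $|x_v|$ and $2|x_v|$ for the other two, and similarly $|x_v|$ itself.

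For the first assertion (no pair of \emph{positive} integers $(\alpha,\beta)$ with $|x_v|=\alpha|x_1|+\beta|x_2|$), I would argue by parity and size: $|x_1|$, $|x_2|$ and $|x_v|$ are all even, so reduce mod a convenient small modulus — one checks $|x_1|\equiv |x_2| \pmod{?}$ fails to divide $|x_v|$ appropriately, or more robustly, observe that since $|x_v|$ is quadratic in $n$ while $|x_1|,|x_2|$ are linear, any representation forces $\alpha$ or $\beta$ to grow like $n$; writing $\alpha|x_1|+\beta|x_2|$ and reducing modulo $|x_1|$ reduces the question to whether $\beta|x_2|\equiv|x_v|\pmod{|x_1|}$ has a solution with $\beta\ge 1$ in the right range, and then a gcd computation of $\gcd(|x_1|,|x_2|)$ (a small constant, computable from $30n+18$ and $36n+22$) finishes it: the residue of $|x_v|$ modulo that gcd is nonzero. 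For the three target degrees of the form $|z|-|x_v^{j}y_1y_2y_3|$ with $(\alpha,\beta)$ \emph{non-negative}, the cleanest route is the same substitution trick used in Lemma~\ref{lemma: divalg}: set $|x_1|=2m$ (so $n=(m-9)/15$) and clear denominators, reducing each target to an integer polynomial in $m$ whose value modulo $2m$ is a bounded constant (at most some explicit small number), hence nonzero since $2m=30n+18$ is large; this simultaneously shows the expression is not a multiple of $|x_1|$, but for the representability statement I instead need: modulo $|x_1|$, the target is congruent to $\beta|x_2|$, and since $\gcd(|x_1|,|x_2|)$ divides the target's residue only for a unique residue class of $\beta$, I then check that the forced value of $\beta$ would make $\alpha|x_1|=m-\beta|x_2|$ negative, contradicting $\alpha\ge 0$.

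The genuinely fiddly part — and the main obstacle — is bookkeeping: correctly expanding $|z|-|y_1|-|y_2|-|y_3|$ and its $x_v$-shifted variants as explicit quadratics in $n$, then performing the mod-$|x_1|$ and mod-$|x_2|$ reductions without arithmetic slips, and verifying that the gcd of $\{30n+18,\,36n+22\}$ (which is $\gcd(30n+18,\,6n+4)$, a divisor of a small constant independent of $n$) genuinely obstructs each residue. There is no conceptual difficulty: once the polynomials are on the table, each case is a one-line congruence plus a one-line size estimate, and the size estimates are uniform in $n\ge 1$ because the obstructing residues are absolutely bounded while $|x_1|,|x_2|$ are not. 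I would present it as: "Using $k=6n+4$ in Lemma~\ref{lemma: divalg} and the explicit degrees above, write out the targets; reduce modulo $|x_1|$ (resp.\ $|x_2|$); in each case the residue is a fixed small integer that is not a multiple of $\gcd(|x_1|,|x_2|)$, so no representation exists, and a fortiori for $m=|x_v|$ with $\alpha,\beta\ge 1$." The details, being routine, I would compress or relegate, mirroring the "details are omitted" tone of Lemma~\ref{prop:dmz}.
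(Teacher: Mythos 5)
Your main proposed obstruction does not exist, and this is a real gap rather than a presentational issue. You claim the argument reduces to ``the residue of the target modulo $\gcd(|x_1|,|x_2|)$ is nonzero.'' But $-6|x_1|+5|x_2| = -6(30n+18)+5(36n+22)=2$, so $\gcd(|x_1|,|x_2|)=2$, and every one of the four targets is even: $|x_v|=180n^2+218n+66$ is even, $|z|-|y_1y_2y_3|=540n^2+258n-40$ is even, and subtracting $|x_v|$ or $2|x_v|$ preserves parity. Hence each target \emph{is} representable as $\alpha|x_1|+\beta|x_2|$ over $\mathbb{Z}$; indeed $(0,5n+3)$ already solves $|x_v|=\alpha|x_1|+\beta|x_2|$ in non-negative integers. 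The content of the lemma is not a divisibility obstruction at all; it is that no \emph{positive} solution (for $|x_v|$) or \emph{non-negative} solution (for the other three) exists. Any ``one-line congruence'' will pass, because the congruence condition is satisfied. Your proposed a fortiori step also fails: the sign constraints for $|x_v|$ and for the $|z|$-type targets are genuinely different, and $|x_v|$ does have a non-negative solution, so the cases cannot be folded together.

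Your secondary remark --- reduce mod $|x_1|$ to pin down $\beta$ in a residue class, then check that the forced $\beta$ makes $\alpha|x_1|=m-\beta|x_2|$ negative --- is in the right spirit and is essentially what the paper does, but it is not carried out, and without explicit numbers there is nothing to verify. The paper writes the general integer solution as $\bigl(\alpha_0 + k(18n+11),\ \beta_0 - k(15n+9)\bigr)$, $k\in\mathbb{Z}$, using $|x_2|/2=18n+11$, $|x_1|/2=15n+9$, and a particular solution $(\alpha_0,\beta_0)$ for each target (e.g.\ $(0,5n+3)$ for $|x_v|$, $(-12,15n+8)$ for $|z|-|y_1y_2y_3|$, $(-12,10n+5)$ and $(-12,5n+2)$ for the $x_v$-shifted ones). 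It then checks directly that for every $k$ at least one component is negative (for $|x_v|$, that at least one is $\leq 0$). Without producing these particular solutions and the two-sided sign analysis in $k$, the proof is not done; and the congruence-obstruction framing you lead with would actively mislead you into thinking there is nothing left to check.
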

\begin{proof}
To obtain the general solution of the diophantine  equation \eqref{eq:dioph}, first notice that
\begin{align*}
-6 |x_1|+ 5|x_2|&=-6(30n+18)+5(36n+22),\\
&=2,
\end{align*}
so the greatest common divisor of $|x_1|$ and $|x_2|$ is  $2$.   Thus it suffices to compute a particular solution $(\alpha,\beta)$, and express the general one as $$\big(\alpha + k\frac{|x_2|}{2}, \beta - k\frac{|x_1|}{2}\big)=\big(\alpha + k(18n+11), \beta - k(15n+9)\big), \; k\in\mathbb{Z}.$$

For $m = |x_v|= 180n^2 + 218n + 66$,  the pair $(0,5n+3)$ is a particular solution for \eqref{eq:dioph} and the general solution is $\big( k(18n+11), 5n+3 - k(15n+9)\big)$, $k\in\mathbb{Z}$.  It is clear that for $k>0$, the second component is negative, and for $k\le 0$, the first component is non positive. Thus there is no solution where both integers are positive.

For the rest of cases we just write the general solution below here. It is straightforward to check that at least one of the two components of the pair is negative for all $k \in\mathbb{Z}$:
\begin{table}[H]
\renewcommand\arraystretch{1.5}
\noindent\[
\begin{array}{|c|c|}
\hline
m & \text{general solution}\\
\hline
 |z|-|y_1y_2y_3|&\big(-12+k(18n+11),15n+8-k(15n+9) \big)\\
\hline
|z|-|x_v y_1 y_2 y_3|&\big(-12+k(18n+11), 10n+5-k(15n+9)\big)\\
\hline
|z|-|x_v^2 y_1 y_2 y_3|& \big(-12 + k(18n+11),  5n+2 - k(15n+9)\big)\\
\hline
\end{array}
\]
\end{table}
\noindent
\end{proof}

\subsection*{Sullivan algebras $M_n(\G)$ are elliptic}
First, we recall that a Sullivan algebra $(\Lambda W, d)$ is said to be elliptic when both $W$  and $H^\ast (\Lambda W)$ are finite-dimensional. The cohomology of an elliptic Sullivan algebra verifies Poincar\'e duality \cite{Hal}. One can compute the degree of its fundamental class (a fundamental class of a Poincar\'e duality algebra $H=\Sigma_{i=0}^m H^i$, is a generator of $H^m$, $m$ is said to be the formal dimension of the algebra) by the formula \cite[Theorem 32.6]{FHT2}:
	\begin{equation}\label{dimensiontop}
	\textstyle{ \sum_{i=1}^p}(\deg y_i) - \sum_{j=1}^q (\deg x_j -1)
	\end{equation}
	where $\deg y_i$ are the degrees of the elements of a basis of $W^{\text{odd}}$ and $\deg x_j$ of a basis of $W^{\text{even}}$.

The existence of a fundamental class for elliptic Sullivan algebras gives rise to a notion of degree for self maps.

\begin{definition}\label{def:degree}
Let $M$ be an elliptic Sullivan algebra with fundamental class $c_M\in H^m(M)$. Given $f\in \End (M)$ we say that the degree of $f$ is $k\in\mathbb{Q}$, $\deg(f)=k$, if $f^\ast(c_M)=kc_M$.
\end{definition}

The following result is inspired in classical works by Hopf \cite{Hopf1, Hopf2}:
\begin{proposition}\label{prop:hopf}
Let $M$ be an elliptic Sullivan algebra and $f\in [M,M]$. Then $f$ is a self-homotopy equivalence if and only if $\deg(f)\ne 0$.
\end{proposition}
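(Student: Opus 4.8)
The plan is to establish both implications by exploiting ellipticity together with the fact that a self-map of $M$ which is not a homotopy equivalence cannot induce an isomorphism in cohomology, and conversely.

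\textbf{The easy direction.} Suppose $f$ is a self-homotopy equivalence. Then $f^\ast$ is an automorphism of the finite-dimensional graded algebra $H^\ast(M)$, hence preserves each graded piece; in particular it restricts to an automorphism of $H^m(M)\cong\Q$, so $f^\ast(c_M)=kc_M$ with $k\neq 0$, i.e.\ $\deg(f)\neq0$. This uses only that $M$ is elliptic so that $H^m(M)$ is one-dimensional and that cohomology is a homotopy invariant.

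\textbf{The substantive direction.} Now assume $\deg(f)\neq0$; I must show $f$ is a homotopy equivalence, equivalently (since for minimal Sullivan algebras a self-map is a homotopy equivalence iff it induces an isomorphism on cohomology, which by minimality is iff it induces an isomorphism on the space of indecomposables $W=W^{\text{even}}\oplus W^{\text{odd}}$) that $H^\ast(f)$ is an isomorphism. The strategy is a dimension/duality argument. Since $H^\ast(M)$ is a Poincar\'e duality algebra of formal dimension $m$, the pairing $H^i(M)\otimes H^{m-i}(M)\to H^m(M)\cong\Q$ is non-degenerate. Given $0\neq\alpha\in H^i(M)$, pick $\beta\in H^{m-i}(M)$ with $\alpha\beta=c_M$. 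If $f^\ast\alpha=0$ then $f^\ast(c_M)=f^\ast\alpha\cdot f^\ast\beta=0$, contradicting $\deg(f)=k\neq0$. Hence $f^\ast$ is injective on each $H^i(M)$; as each $H^i(M)$ is finite-dimensional, $f^\ast$ is an isomorphism in every degree, so $f^\ast$ is an isomorphism of $H^\ast(M)$. It remains to pass from ``$H^\ast(f)$ iso'' to ``$f$ a homotopy equivalence''. For this I invoke the standard fact from rational homotopy theory (see \cite{FHT2}): a morphism between minimal Sullivan algebras is a homotopy equivalence (quasi-isomorphism) if and only if it induces an isomorphism on cohomology; equivalently one can argue that a quasi-isomorphism of minimal Sullivan algebras induces an isomorphism $Q(f)\colon W\to W$ on indecomposables, and conversely an endomorphism inducing an iso on cohomology must induce an iso on $W$ (filter by word length / induct on degree), whence it is invertible up to homotopy.

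\textbf{Main obstacle.} The only delicate point is the last implication, ``cohomology isomorphism $\Rightarrow$ homotopy equivalence,'' for a \emph{self}-map rather than a map between a priori different algebras. For general maps this is the Whitehead-type theorem in the Sullivan model category; for an endomorphism $f$ of a minimal Sullivan algebra $(\Lambda W,d)$ with $H^\ast(f)$ an isomorphism, one checks that the linear part $Q(f)\colon W\to W$ is an isomorphism (a nilpotent or merely non-invertible linear part would be detected by cohomology in low word-length degrees via the minimality condition $dW\subset\Lambda^{\geq2}W$), and an endomorphism of $\Lambda W$ with invertible linear part is an automorphism, in particular a homotopy equivalence. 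I would cite \cite[\S12--14]{FHT2} for this and not reprove it. Everything else is immediate from Poincar\'e duality and the definition of degree.
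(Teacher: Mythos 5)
Your proof is correct and follows essentially the same route as the paper: the easy direction uses multiplicativity of degree, and the substantive direction uses Poincar\'e duality to show $H^\ast(f)$ is injective (if $f^\ast\alpha=0$, pair $\alpha$ against a dual class to contradict $\deg(f)\neq0$), then finite-dimensionality of each $H^i(M)$ to upgrade injective to bijective, then the standard Whitehead-type theorem for Sullivan algebras. The paper's proof is the same argument; your extra discussion of linear parts on indecomposables is just a fuller justification of the final citation, not a divergence in method.
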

\begin{proof}
It is clear that a self homotopy equivalence has a non-zero degree since $\deg(Id)=1$ and the degree is a multiplicative invariant. Suppose now that $\deg(f)\ne 0$. Then, we are going to prove that $H^*(f)$ is injective. Indeed,  assume that here exist a non trivial class $\alpha\in H^p(M)$ such that $f^\ast(\alpha)=0$. Now, since $H^*(M)$ is a Poincar\'e duality algebra, consider $c_M\in H^m(M)$ the fundamental class and the (non-degenerate) bilinear form $$\langle\,,\rangle\colon H^p(M)\otimes H^{m-p}(M)\to \mathbb{Q}$$ given by $xy=\langle x, y\rangle c_M$. This bilinear form induces an isomorphism $$\theta\colon H^p(M)\to H^{m-p}(M)^\sharp$$ given by $\theta(x)(y)=\langle x, y\rangle$. Therefore, since $\alpha\ne 0$, there exists $\beta\in H^{m-p}(M)$ such that $\alpha\beta=c_M$. Then $0=f^\ast(\alpha)f(\beta)=f^\ast(\alpha\beta)=f^\ast(c_M)=\deg(f)c_M$
which implies $0=\deg(f)$, contradicting our initial assumption. Hence  $H^*(f)$ is injective and,  as we are considering finite type algebras, $f^\ast$ is a quasi-isomorphism, which means that  $f$ is a self-homotopy equivalence.
\end{proof}

Finally, we prove the following lemma.
\begin{lemma}\label{lemma: ellipticminimal}
The algebra ${M}_n(\mathcal{G})$ is an elliptic Sullivan algebra
	of formal dimension $540n^2 + 984n + 396 + |V(\G)|(360n^2 + 436n + 132)$.
\end{lemma}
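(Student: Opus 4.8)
The plan is to verify ellipticity by exhibiting that both the generating space $W$ and the cohomology $H^\ast(M_n(\mathcal G))$ are finite-dimensional, and then to compute the formal dimension via formula \eqref{dimensiontop}. Finite-dimensionality of $W$ is immediate from Definition~\ref{def: m_nG}: there are $6$ generators $x_1,x_2,y_1,y_2,y_3,z$ together with $2|V(\mathcal G)|$ generators $x_v, z_v$, one pair for each vertex. For finite-dimensionality of the cohomology, the key point is to show that every even-degree generator is nilpotent in $H^\ast$, equivalently that some power of each of $x_1$, $x_2$ and each $x_v$ is a coboundary. For $x_1$ and $x_2$ this follows from the left factor: recalling from Remark~\ref{rmk:como es M(G)} that $M_n(\mathcal G) = M_{6n+4}\otimes\Lambda(x_v,z_v\mid v\in V(\mathcal G))$, and that in $M_{6n+4}$ the element $dz$ contains the summands $x_1^{18n+11}$ and $x_2^{15n+9}$ (using $\frac{6k-2}{2}=3k-1 = 18n+11$ and $\frac{5k-2}{2}=15n+9$ when $k=6n+4$), one sees that $x_1^{18n+11}$ and $x_2^{15n+9}$ are cohomologous to the (nilpotent, being of odd degree up to the $y_iy_j$ factors) cocycle $-x_1^{18n}(x_1^2y_2y_3-x_1x_2y_1y_3+x_2^2y_1y_2)$; iterating once more kills $x_1$ and $x_2$ in cohomology. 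For each $x_v$, the relation $dz_v = x_v^3 + \sum_{(v,w)\in E(\mathcal G)} x_v x_w x_2^{5n+3}$ shows $x_v^3 = dz_v - x_v\big(\sum_w x_w x_2^{5n+3}\big)$, and since $x_2$ is already nilpotent in cohomology this expresses a power of $x_v$ as a coboundary modulo nilpotents; a standard induction on $|V(\mathcal G)|$ (or on the number of edges) then shows each $x_v$ is nilpotent in $H^\ast$. Since a Sullivan algebra on finitely many generators in which every even generator is nilpotent in cohomology has finite-dimensional cohomology, $M_n(\mathcal G)$ is elliptic.

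With ellipticity established, the formal dimension is computed directly from \eqref{dimensiontop}: it equals $\sum(\deg y_i) - \sum(\deg x_j - 1)$ where the $y_i$ range over a basis of $W^{\mathrm{odd}}$ and the $x_j$ over a basis of $W^{\mathrm{even}}$. Here $W^{\mathrm{odd}}$ has basis $y_1,y_2,y_3,z$ together with $z_v$ for $v\in V(\mathcal G)$, while $W^{\mathrm{even}}$ has basis $x_1,x_2$ together with $x_v$ for $v\in V(\mathcal G)$. Plugging in the degrees from Definition~\ref{def: m_nG}, the contribution from the left factor is
\[
\big(|y_1|+|y_2|+|y_3|+|z|\big) - \big((|x_1|-1)+(|x_2|-1)\big),
\]
and from each vertex $v$ the contribution is $|z_v| - (|x_v|-1) = (540n^2+654n+197) - (180n^2+218n+66-1) = 360n^2+436n+132$. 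A short arithmetic check of the left-factor contribution yields $540n^2+984n+396$, so the total formal dimension is $540n^2+984n+396 + |V(\mathcal G)|(360n^2+436n+132)$, as claimed.

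The only genuinely delicate step is the nilpotence-in-cohomology argument for the $x_v$, because the relations $dz_v$ couple different vertices through the edge terms $x_vx_wx_2^{5n+3}$; one must be careful that the induction does not circularly rely on nilpotence of a vertex not yet handled. The clean way around this is to first establish that $x_1,x_2$ are nilpotent in $H^\ast$ using only the left factor (which is self-contained), and then observe that modulo the ideal generated by a suitable power of $x_2$ the relation $dz_v$ reduces to $x_v^3$, so that $x_v^3$ lies in the subalgebra generated by coboundaries and nilpotent elements; since the quotient of $\Lambda(x_v\mid v)$ by the ideal $(x_v^3\mid v)$ is finite-dimensional, combining these facts gives finite-dimensionality of the whole cohomology without needing a vertex-by-vertex induction at all. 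Alternatively, one may simply invoke the pure Sullivan algebra criterion of \cite[Proposition 32.4]{FHT2} once one checks that the associated pure algebra has finite-dimensional cohomology, which again reduces to the same nilpotence statements.
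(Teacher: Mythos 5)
Your proposal follows the same two-step strategy as the paper (establish ellipticity, then compute the formal dimension via the standard formula \eqref{dimensiontop}), and your formal-dimension arithmetic is correct and matches the paper exactly. However, the main body of your ellipticity argument — nilpotence of even generators worked out directly in the full, non-pure algebra — has a genuine gap. From
\[
dz = x_1^{18n}\bigl(x_1^2y_2y_3 - x_1x_2y_1y_3 + x_2^2y_1y_2\bigr) + x_1^{18n+11} + x_2^{15n+9}
\]
you can only conclude that the \emph{sum} $x_1^{18n+11}+x_2^{15n+9}$ is cohomologous to $-x_1^{18n}(x_1^2y_2y_3-\cdots)$, not that each power is individually. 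Squaring (``iterating once more'') gives $\bigl[x_1^{18n+11}+x_2^{15n+9}\bigr]^2 = 0$ because the $y_iy_j$ term is a square-zero element, but this still yields only a relation among mixed monomials $x_1^a x_2^b$, not $[x_1]^N=0$ or $[x_2]^M=0$ separately. Trying to isolate, say, $x_1^{18n+12}$ in the full algebra by multiplying $dz$ by $x_1$ and killing $x_1x_2^{15n+9}$ with $d(x_2^{15n+6}y_3)$ still leaves the stray term $x_1^{18n+1}(x_1^2y_2y_3-\cdots)$, and there is no clean way to absorb it into a coboundary in the non-pure algebra.

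This is exactly why the paper passes to the associated pure Sullivan algebra $(M_n^\varsigma(\G),d_\varsigma)$, invoking \cite[Proposition~32.4]{FHT2}: there $d_\varsigma z = x_1^{18n+11}+x_2^{15n+9}$ with no $y_iy_j$ term, so $d_\varsigma(zx_1-x_2^{15n+6}y_3)=x_1^{18n+12}$ and $d_\varsigma(zx_2-x_1^{18n+8}y_1)=x_2^{15n+10}$ exhibit $x_1,x_2$ as nilpotent in $H(M_n^\varsigma(\G))$, and then $[x_v^3]^4 = \bigl[-\sum_{(v,w)}x_vx_wx_2^{5n+3}\bigr]^4=0$ because each summand on the right contains $x_2^{20n+12}$. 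You mention this pure-algebra route only as an afterthought, but it is not an alternative — it is the argument that actually closes the gap, and it should be promoted to the main proof. Your closing appeal to the claim ``a Sullivan algebra with all even generators nilpotent in cohomology is elliptic'' is circular as stated, since the standard way to prove such a statement for a non-pure algebra is precisely via the pure model; it is safer and more honest to state the nilpotence in $H(M_n^\varsigma(\G))$ directly, as the paper does.
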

\begin{proof}
	We have to prove the cohomology of ${M}_n(\mathcal{G})$ is finite-dimensional. Instead, we prove the equivalent condition that the pure Sullivan algebra associated to ${M}_n(\mathcal{G})$,\cite[Proposition 32.4]{FHT2}, is also elliptic. This pure algebra, denoted by $(M_n^\varsigma(\mathcal{G}), d_\varsigma)$, has the same generators as  ${M}_n(\mathcal{G})$ and differentials
	\[\begin{array}{ll}
	d_\varsigma (x_1)=0, & d_\varsigma(x_2) = 0, \\
	d_\varsigma(y_1) = x_1^3 x_2, & d_\varsigma(y_2) = x_1^2 x_2^2, \\
	d_\varsigma(y_3) = x_1 x_2^3, & d_\varsigma(z) = x_1^{18n+11} + x_2^{15n+9}, \\
	d_\varsigma(x_v) = 0, \ v\in V(\G), & d_\varsigma(z_v) = x_v^3 + \Sigma_{(v,w)\in E(\G)} x_v x_w x_2^{5n+3}, \ v\in V(\G).
	\end{array}\]
	The cohomology of ${M}^\varsigma_n(\mathcal{G})$ is finite-dimensional because
	\[d_\varsigma(z x_1 - x_2^{15n+6} y_3) = x_1^{18n+12} \quad \text{and} \quad d_\varsigma(z x_2 - x_1^{18n+8}y_1) = x_2^{15n+10},\] and the cohomology class
	\[[x_v^3]^4 = \left[- \sum_{(v,w)\in E(\G)}  x_v x_w x_2^{5n+3}\right]^4=0.\]
	Hence ${M}_n(\mathcal{G})$ is elliptic and its formal dimension is obtained by \eqref{dimensiontop}.
\end{proof}

\subsection*{Main theorem in Section \ref{sec:realiz}}
We present the main result in this section. For $M_n(\G), \, n \geq 1,$ the minimal Sullivan algebra from Definition \ref{def: m_nG}, we prove:

\begin{theorem}\label{teor:sullgraph}
For any finite simple graph $\mathcal G$, the $(30n+17)$-connected algebra  $M_n ( \G)$ realizes $\Aut (\mathcal G)$, i.e.,  $$\mathcal E (M_n ( \G)) \cong \Aut (\mathcal G).$$
\end{theorem}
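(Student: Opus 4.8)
The plan is to compute $\mathcal E(M_n(\G))$ directly by analysing an arbitrary $f \in \End(M_n(\G))$, exactly as in the proof of Theorem~\ref{proposition: rigidalgebras}, but now keeping track of the extra generators $x_v, z_v$. First I would use degree reasons (Lemma~\ref{lemma: isolated} for the $M_{6n+4}$-part together with Lemma~\ref{lem:dioph}, which rules out the relevant powers of $x_1,x_2$ being confused with $x_v$ or with the $y$-monomials accompanying $z$) to pin down the form of $f$ on generators: $f(x_i)=a_i x_i$, $f(y_j)=b_j y_j$, $f(x_v)=\sum_w \lambda_{vw} x_w$ for scalars $\lambda_{vw}$ (here one must check no monomial in $\Q[x_1,x_2]$ has degree $|x_v|$, which is Lemma~\ref{lem:dioph}), and $f(z), f(z_v)$ each a linear combination of $z$, the $z_w$, and correction terms living in $\Lambda(x_1,x_2)\otimes\Lambda^{\le 3}(y_1,y_2,y_3)$ and in $x_w^j\cdot(\text{$y$-monomials})$. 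The parity bookkeeping of Remark~\ref{rem:parity} again forbids $\Lambda^2(y_1,y_2,y_3)$-terms of the wrong parity.

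Next I would run the argument of Theorem~\ref{proposition: rigidalgebras} on the left tensor factor: comparing $df(y_j)=f(dy_j)$ gives $b_1=a_1^3a_2$, etc., and comparing $df(z)=f(dz)$, using Lemma~\ref{prop:dmz} to kill the cocycle $\sum y_iA_i$ as a coboundary and Lemma~\ref{lem:dioph} to kill the $y_1y_2y_3$-coefficient, forces the scalar on $z$ and the $a_i$ to satisfy the same system~\eqref{eq: factorescaibi} (with $k=6n+4$), whence either everything on the left factor is $0$ or $a_1=a_2=b_1=b_2=b_3=(\text{coeff of }z)=1$. The new input is the graph part. Applying $d$ to $f(z_v)=\sum_w \mu_{vw}z_w + cz + (\text{corrections})$ and matching with $f(dz_v)=f(x_v)^3 + \sum_{(v,w)\in E}f(x_v)f(x_w)x_2^{5n+3}$ (using $f(x_2)=a_2x_2=x_2$ once the left factor is non-trivial), the leading cubic terms give $\big(\sum_w\lambda_{vw}x_w\big)^3 = \sum_w \mu_{vw}x_w^3 + (\text{edge terms})$; since distinct $x_w$ are algebraically independent, cross terms $x_w^2x_{w'}$ ($w\ne w'$) on the left must be cancelled, and as $x_2^{5n+3}$ is not a multiple of any $x_w$, the edge terms cannot help — this forces each $f(x_v)=\lambda_v x_{\sigma(v)}$ for a single index $\sigma(v)$ and a scalar $\lambda_v$, with $\lambda_v^3=\mu_{v\sigma(v)}$. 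Then comparing the $x_{\sigma(v)}x_{\sigma(w)}x_2^{5n+3}$ terms shows $(v,w)\in E(\G) \Leftrightarrow (\sigma(v),\sigma(w))\in E(\G)$, i.e.\ $\sigma$ is a graph automorphism (once one checks $f$ is an equivalence so that $\sigma$ is bijective), and a short scalar computation à la~\eqref{eq: factorescaibi} forces $\lambda_v=1$ and $\mu_{v\sigma(v)}=1$. Conversely every $\sigma\in\Aut(\G)$ evidently induces a self-map permuting the $x_v$'s and $z_v$'s, and one checks these are homotopy equivalences with $\deg\ne 0$ via Proposition~\ref{prop:hopf} and Lemma~\ref{lemma: ellipticminimal}; distinct $\sigma$ give distinct homotopy classes because they act differently on $H^*$ in the degree of the $x_v$. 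This yields a monoid isomorphism onto $\{0\}\sqcup\Aut(\G)$, hence $\mathcal E(M_n(\G))\cong\Aut(\G)$.

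The main obstacle I anticipate is the rigidity step for the graph part: showing that a priori $f(x_v)$ is an arbitrary linear combination $\sum_w\lambda_{vw}x_w$, yet the relation $dz_v = x_v^3+\sum x_vx_wx_2^{5n+3}$ forces it to be a scalar multiple of a single $x_{\sigma(v)}$. The cube $\big(\sum_w\lambda_{vw}x_w\big)^3$ expands with monomials $x_w x_{w'} x_{w''}$, and one has to argue that all mixed ones must cancel against $\sum_w\mu_{vw}x_w^3+(\text{edge corrections})$; the edge corrections are all divisible by $x_2^{5n+3}$, and by Lemma~\ref{lem:dioph} no power of $x_2$ can absorb an $x_w$, so mixed monomials genuinely cannot be cancelled and all but one $\lambda_{vw}$ vanish — but making this rigorous requires care about the monomial basis of $\Lambda(x_1,x_2,x_v\mid v)$ and about degree constraints (e.g.\ that $|x_v|\ne |x_1|,|x_2|$ and $|x_v^3|$ coincides with $|z_v|$'s worth of differential, matching $|z|$). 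A secondary subtlety is ensuring the correction terms in $f(z_v)$ and $f(z)$ genuinely contribute nothing, which is where Lemma~\ref{prop:dmz} (coboundary-killing) and the parity remark do the work; this is the analogue of the $B=0$ and $\sum y_iA_i$ steps in Theorem~\ref{proposition: rigidalgebras}, and should go through once the degree tables in Lemmas~\ref{lemma:divalg2} and~\ref{lem:dioph} are invoked with the substitution $k=6n+4$.
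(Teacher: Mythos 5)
Your overall strategy matches the paper's (Lemma~\ref{lemma: principal}), but there is a concrete gap in the step where you pin down $f(x_v)$. You write $f(x_v)=\sum_w\lambda_{vw}x_w$ and justify this by invoking Lemma~\ref{lem:dioph} to ``check no monomial in $\Q[x_1,x_2]$ has degree $|x_v|$.'' That is not what Lemma~\ref{lem:dioph} says: it only rules out \emph{mixed} monomials $x_1^\alpha x_2^\beta$ with $\alpha,\beta>0$. A pure power of $x_2$ in degree $|x_v|$ does exist — indeed $(5n+3)|x_2|=180n^2+218n+66=|x_v|$, which is precisely why $dz_v$ contains $x_v x_w x_2^{5n+3}$. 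Moreover, since $k=6n+4$ is even, $\Lambda^2(y_1,y_2,y_3)$-monomials have even degree and Remark~\ref{rem:parity} does \emph{not} forbid them in $f(x_v)$; a quick degree count shows $|x_v|-|y_iy_j|>0$ for $n\ge 1$, so terms of the form $A(v)y_1y_2+B(v)y_1y_3+C(v)y_2y_3$ with $A,B,C\in\Q[x_1,x_2]$ can genuinely occur. The paper therefore works with the fuller ansatz
\[
f(x_v)=\sum_{w}a(v,w)x_w + a_2(v)x_2^{5n+3} + A(v)y_1y_2 + B(v)y_1y_3 + C(v)y_2y_3,
\]
and then uses $df(x_v)=0$ to collapse the $y_iy_j$-part to a coboundary $d(P(v)y_1y_2y_3)$, and later uses the \emph{absence of loops} in $\G$ (no $x_{\sigma(v)}^2 x_2^{5n+3}$ on the target side of $df(z_v)=f(dz_v)$) to force $a_2(v)=0$, and the absence of $x_{\sigma(v)}^2$-terms involving $y_i$ to force $P(v)=0$.

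This matters for your cube argument: if $f(x_v)$ still carries the summand $a_2(v)x_2^{5n+3}$, then $f(x_v)^3$ and the edge products $f(x_v)f(x_w)x_2^{5n+3}$ contribute additional monomials such as $x_w^2x_2^{5n+3}$, $x_wx_2^{2(5n+3)}$, $x_2^{3(5n+3)}$, and these pollute exactly the comparison you use to isolate a single $\lambda_{v\sigma(v)}$. So the passage ``all but one $\lambda_{vw}$ vanish'' does not follow as stated; you must first dispose of $a_2(v)$ and $P(v)$ as the paper does. Once those extra terms are killed, the rest of your reasoning — cross terms $x_wx_{w'}x_{w''}$ cannot be cancelled, $\sigma$ is a graph automorphism, the scalars collapse to $1$ by connectivity of $\G$, and the converse via Lemma~\ref{lemma: aut} and Proposition~\ref{prop:hopf} — agrees with the paper's proof.
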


Before getting into the details of the proof, one immediate consequence:

\begin{theorem}\label{cor:sullgraph}
Any finite group $G$ can be realized by infinitely many highly connected rational spaces.
 \end{theorem}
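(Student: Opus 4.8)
The plan is to deduce Theorem~\ref{cor:sullgraph} directly from Theorem~\ref{teor:sullgraph} together with Frucht's theorem. First I would recall that by Frucht's theorem \cite{Frucht1}, for any finite group $G$ there exists a finite, connected and simple graph $\mathcal G$ with $\Aut(\mathcal G)\cong G$; moreover one can always arrange $\mathcal G$ to have more than one vertex (if $G$ is trivial, replace the one-point graph by, say, a path on three vertices, whose automorphism group is still trivial), so that $\mathcal G$ meets the hypotheses of Definition~\ref{def: m_nG}. Then for each integer $n\ge 1$, Theorem~\ref{teor:sullgraph} produces a $(30n+17)$-connected minimal Sullivan algebra $M_n(\mathcal G)$ with $\mathcal E(M_n(\mathcal G))\cong \Aut(\mathcal G)\cong G$.

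Next I would use the equivalence of categories between minimal Sullivan algebras and rational simply connected spaces of finite type given by Sullivan's realization functor \cite{Su}, as recalled at the beginning of Section~\ref{sec:rigalg}: the geometric realization of $M_n(\mathcal G)$ is a rational space $X_n$ with $\mathcal E(X_n)\cong \mathcal E(M_n(\mathcal G))\cong G$, and $X_n$ is $(30n+17)$-connected since its minimal model is. As $n$ ranges over the positive integers, the connectivity $30n+17$ is strictly increasing, so the spaces $X_n$ are pairwise non-homotopy-equivalent; hence we obtain infinitely many highly connected rational spaces realizing $G$, which is exactly the assertion.

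The only point requiring a line of care — and the closest thing to an obstacle — is the degenerate case of the trivial group, where one must exhibit a suitable graph with at least two vertices and trivial automorphism group so that Definition~\ref{def: m_nG} applies; this is immediate. Everything else is a formal consequence of Theorem~\ref{teor:sullgraph}, Frucht's theorem, and Sullivan's equivalence of homotopy categories, so no further computation is needed.
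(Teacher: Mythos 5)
Your proposal is correct and takes essentially the same approach as the paper: invoke Frucht's theorem to get a suitable graph and then apply Theorem~\ref{teor:sullgraph} for each $n\ge 1$. The extra care you take — ensuring the graph is connected with more than one vertex (in particular for trivial $G$), and noting that the strictly increasing connectivity $30n+17$ makes the resulting rational spaces pairwise non-equivalent — is a welcome tightening of details that the paper leaves implicit, but it is the same argument.
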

 \begin{proof}
 In  \cite{Frucht1}, Frucht proves that every finite group $G$ is the group of automorphisms of a finite simple graph $\mathcal G$. It suffices then to apply Theorem \ref{teor:sullgraph}.
 \end{proof}

For the proof of Theorem \ref{teor:sullgraph}, two more lemmas are needed.  Recall from Remark \ref{rmk:como es M(G)} that algebras ${M}_n(\mathcal{G})$ are constructed to codify the graph $\mathcal{G}$ in terms of vertices (generators $x_v$) and their neighborhoods (generators $z_v$).  Since an automorphism of $\mathcal{G}$ is just a permutation of vertices that preserve neighborhoods,  we  deduce a first lemma. Explicitly:

\begin{lemma}\label{lemma: aut}
	Every $\sigma \in \Aut(\mathcal{G})$ induces $f_\sigma \in \Aut({M}_n(\mathcal{G}))$ defined by
	\[\begin{array}{lll}
	f_\sigma(\omega)=\omega, 		& \text{for $\omega \in \{x_1,x_2,y_1,y_2,y_3,z\}$,} \\
	f_\sigma(x_v) = x_{\sigma(v)}, 		& \text{for $v\in V(\G)$,} \\
	f_\sigma(z_v) = z_{\sigma(v)}, 		& \text{for $v\in V(\G)$.}
	\end{array}\]
Moreover, given $\sigma_1,\sigma_2 \in \Aut(\mathcal{G})$, then $f_{\sigma_1}\simeq f_{\sigma_2}$ if and only if $\sigma_1=\sigma_2$.
\end{lemma}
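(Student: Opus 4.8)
The plan is to verify that $f_\sigma$ is a well-defined CDGA endomorphism, that it is in fact an automorphism, and then to pin down the condition under which two such maps are homotopic. To check that $f_\sigma$ respects the differential, the only nontrivial relations to verify are $f_\sigma(dz_v) = d f_\sigma(z_v)$ for $v\in V(\G)$; for all other generators the differential is either zero or lands in $\Lambda(x_1,x_2)$, on which $f_\sigma$ acts as the identity, so compatibility is immediate. For $z_v$ we compute $f_\sigma(dz_v) = f_\sigma\bigl(x_v^3 + \sum_{(v,w)\in E(\G)} x_v x_w x_2^{5n+3}\bigr) = x_{\sigma(v)}^3 + \sum_{(v,w)\in E(\G)} x_{\sigma(v)} x_{\sigma(w)} x_2^{5n+3}$, and since $\sigma$ is a graph automorphism, $(v,w)\in E(\G)$ if and only if $(\sigma(v),\sigma(w))\in E(\G)$, so reindexing the sum by $w' = \sigma(w)$ yields exactly $d z_{\sigma(v)} = d f_\sigma(z_v)$. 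Here I use (as noted in Remark \ref{rmk:como es M(G)}) that the edges are unordered, so the sum over neighbours of $v$ transports cleanly to the sum over neighbours of $\sigma(v)$. That $f_\sigma$ is an algebra automorphism is then formal: $f_{\sigma^{-1}}$ is a two-sided inverse, since $\sigma \mapsto f_\sigma$ is visibly a monoid homomorphism ($f_{\sigma_1}\circ f_{\sigma_2} = f_{\sigma_1\sigma_2}$ on generators, hence everywhere) and $f_{\id}=\id$.

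For the moreover part, one direction is trivial: if $\sigma_1=\sigma_2$ then $f_{\sigma_1}=f_{\sigma_2}$, hence they are homotopic. The substantive direction is to show that $f_{\sigma_1}\simeq f_{\sigma_2}$ forces $\sigma_1=\sigma_2$. The idea is to pass to cohomology: homotopic maps induce the same map on $H^\ast(M_n(\G))$, so it suffices to exhibit, for each vertex $v$, a cohomology class that detects $\sigma(v)$ — more precisely, a class $c_v\in H^\ast(M_n(\G))$ such that $H^\ast(f_\sigma)$ sends (a suitable component of) $c_v$ to the analogous class $c_{\sigma(v)}$, in a way that lets us read off $\sigma$. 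The natural candidates are the classes $[x_v]$ in degree $|x_v| = 180n^2+218n+66$. By Lemma \ref{lem:dioph}, $|x_v|$ is not of the form $\alpha|x_1|+\beta|x_2|$ with $\alpha,\beta$ positive integers, which should prevent $H^{|x_v|}$ from containing spurious classes built out of $x_1,x_2$ alone that could mix with the $x_v$'s; combined with the fact that each $x_v$ is a cocycle that is not a coboundary (its only potential primitive would have degree $|x_v|-1$, and there is no room in $\Lambda(x_1,x_2)$ by the same diophantine obstruction, while $z$ and $z_v$ have strictly larger degree), one gets that $\{[x_v] : v\in V(\G)\}$ is a linearly independent family in $H^{|x_v|}(M_n(\G))$ spanning a subspace on which $H^\ast(f_\sigma)$ acts by the permutation $[x_v]\mapsto [x_{\sigma(v)}]$. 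From $H^\ast(f_{\sigma_1}) = H^\ast(f_{\sigma_2})$ we then read $[x_{\sigma_1(v)}] = [x_{\sigma_2(v)}]$ for all $v$, hence $\sigma_1(v)=\sigma_2(v)$ for all $v$, i.e.\ $\sigma_1=\sigma_2$.

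The main obstacle is the cohomological step: one must rule out that the degree-$|x_v|$ cohomology contains additional relations among the $[x_v]$, or extra classes, that would blur the detection of $\sigma$. The diophantine facts from Lemma \ref{lem:dioph} are exactly tailored to close this gap — they guarantee that in degree $|x_v|$ there are no monomials in $x_1,x_2$ at all (so no coboundaries hitting the $x_v$, and no "competing" cocycles from the $M_{6n+4}$ factor), which should make the family $\{[x_v]\}$ behave like a basis of its span on which $\Aut(\G)$ acts by permutation. I expect the argument to be short once these ingredients are assembled; the care needed is mainly in confirming that nothing of degree $|x_v|$ in $M_n(\G)$ involving $z$, $z_v$, or the $y_i$ can enter (degree count: $|z|=|z_v|>|x_v|$, and $|y_i|<|x_v|$ with the complementary factor forced to be a power of $x_1,x_2$, again excluded by Lemma \ref{lem:dioph}), so that $H^{|x_v|}$ is spanned precisely by the $[x_v]$ together with whatever genuine monomials in $x_1,x_2$ survive — and by the lemma there are none of the form $x_1^\alpha x_2^\beta$ with $\alpha,\beta>0$, while pure powers are excluded on degree grounds as well.
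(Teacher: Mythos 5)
Your overall structure is sound, but your execution of the ``moreover'' direction takes a genuinely different route from the paper, and that route as you have written it contains incorrect assertions about what lives in degree $|x_v|$.

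The paper's argument for the ``moreover'' part is much more direct and does not pass through cohomology at all: it invokes the standard fact that homotopic maps between minimal Sullivan algebras induce the same linear part on the space of generators (equivalently, the same map on indecomposables). Since $f_{\sigma_i}(x_v)=x_{\sigma_i(v)}$ is already a generator with no decomposable correction, $f_{\sigma_1}\simeq f_{\sigma_2}$ immediately forces $x_{\sigma_1(v)}=x_{\sigma_2(v)}$, hence $\sigma_1(v)=\sigma_2(v)$ for all $v$. This is a one-line argument given the minimality of ${M}_n(\mathcal{G})$ and is strictly simpler than what you propose.

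Your cohomological alternative is plausible in spirit, but as stated it has two concrete errors. First, you claim that pure powers of $x_1,x_2$ are ``excluded on degree grounds as well'' from degree $|x_v|$. This is false: $|x_2|(5n+3)=(36n+22)(5n+3)=180n^2+218n+66=|x_v|$, so $x_2^{5n+3}$ is a nonzero cocycle (not a coboundary, since $\operatorname{im}d$ in $\Lambda(x_1,x_2)$ lies in the ideal $(x_1x_2)$) living precisely in degree $|x_v|$. Lemma \ref{lem:dioph} only excludes $\alpha|x_1|+\beta|x_2|=|x_v|$ with \emph{both} $\alpha,\beta$ strictly positive; the case $(\alpha,\beta)=(0,5n+3)$ is explicitly exhibited in its proof. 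Second, you claim that nothing of degree $|x_v|$ involving the $y_i$ can enter because ``the complementary factor [would be] forced to be a power of $x_1,x_2$, again excluded.'' This is also false: products $y_iy_j$ (even degree) times polynomials in $x_1,x_2$ do occur in degree $|x_v|$ — this is exactly why the paper's analysis of $f(x_v)$ in the proof of Lemma \ref{lemma: principal} begins with an ansatz $f(x_v)=\sum a(v,w)x_w+a_2(v)x_2^{5n+3}+A(v)y_1y_2+B(v)y_1y_3+C(v)y_2y_3$. Your cohomological argument could likely be salvaged (the extra classes $[x_2^{5n+3}]$ and whatever survives from the $y_iy_j$ terms are all fixed by $H^\ast(f_\sigma)$ and don't obstruct linear independence of the $[x_v]$, since $x_v$ cannot appear in a coboundary as everything in $\operatorname{im} d$ in that degree lies in the ideal $(x_1x_2,x_v x_w)$), but this requires real work that you have skipped, whereas the indecomposables argument avoids all of it.
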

\begin{proof}
Given $\sigma \in \Aut(\mathcal{G})$, it is clear that $f_\sigma \in \Aut({M}_n(\mathcal{G}))$. Finally, if $\sigma_1,\sigma_2 \in \Aut(\mathcal{G})$, and $f_{\sigma_1}\simeq f_{\sigma_2}$, then for every $v\in V(\G)$ the elements $f_{\sigma_1}(x_v)=x_{\sigma_1(v)}$ and $f_{\sigma_2}(x_v)=x_{\sigma_2(v)}$ must by equal up to decomposable elements, that is $x_{\sigma_1(v)}=x_{\sigma_2(v)}$. Therefore $\sigma_1(v)=\sigma_2(v)$ for every $v\in V(\G)$, and $\sigma_1=\sigma_2$.
\end{proof}

For the second lemma, we argue as in \cite[Lemma 2.4]{CV2}:
\begin{lemma}\label{lemma: principal}
	For every $f\in\End\big({M}_n(\mathcal{G})\big)$, one of the following holds:
	\begin{enumerate}
		\item $f$ is an automorphism, and there exists $\sigma \in \Aut(\mathcal{G})$ such that
		\[\begin{array}{ll}
		f(\omega) = f_\sigma(\omega), 		& \text{for $\omega \in \big\{x_1,x_2,y_1,y_2,y_3,x_v\mid v\in V(\G)\big\}$,} \\
		f(z) = f_\sigma(z) + d(m_z),		& m_z \in\big({M}_n(\mathcal{G})\big)^{|z|-1}, \\
		f(z_v) = f_\sigma(z_v) + d(m_{z_v}), 	& \text{for $v\in V(\G)$ and }m_{z_v}\in\big({M}_n(\mathcal{G})\big)^{|z|-1},
		\end{array}\]
        where $f_\sigma$ is the morphism given by Lemma \ref{lemma: aut}.
		
        \item $\deg(f)=0$.
    \end{enumerate}
	In particular, we deduce that for $f\in \End \big({M}_n(\mathcal{G})\big)$, either $f \simeq f_\sigma, \, \sigma \in \Aut(\mathcal{G}),$ or $\deg(f)=0$.
\end{lemma}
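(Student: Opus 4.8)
The plan is to analyze an arbitrary $f\in\End\big(M_n(\mathcal{G})\big)$ by first pinning down the action on generators using degree considerations, exactly as in the proof of Theorem~\ref{proposition: rigidalgebras}, and then extracting the graph-combinatorial content from the images of the $x_v$ and $z_v$. First I would observe that the subalgebra $M_{6n+4}=\Lambda(x_1,x_2,y_1,y_2,y_3,z)$ is preserved in the relevant degrees: by Lemma~\ref{lemma: isolated} (with $k=6n+4$) the degrees $|x_1|<|x_2|<|y_1|<|y_2|<|y_3|$ are isolated and the only generators $x_v$ sit in a single new degree, so $f(x_1)=a_1 x_1$, $f(x_2)=a_2x_2$, $f(y_i)=b_iy_i$, while $f(x_v)$ is a $\mathbb{Q}$-linear combination $\sum_w \lambda_{v,w}x_w$ plus a possible polynomial term in $x_1,x_2$; here Lemma~\ref{lem:dioph} (first statement) rules out the latter since $|x_v|$ is not a positive-integer combination of $|x_1|,|x_2|$. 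Similarly $f(z)$ and $f(z_v)$, being of degree $|z|=|z_v|$, can only involve $cz$, $c_v z_v$, terms $\sum y_iA_i$ with $A_i\in\mathbb{Q}[x_1,x_2]$, terms $By_1y_2y_3$ (ruled out by parity, since $|z|$ is odd and $k=6n+4$ is even, so $y_1y_2y_3$ has odd degree — wait, one must instead use Lemma~\ref{lem:dioph} applied to $|z|-|y_1y_2y_3|$ etc.\ to kill the coefficients of $y_1y_2y_3$, $x_vy_1y_2y_3$, $x_v^2y_1y_2y_3$), and terms with $x_v$-monomials which again Lemma~\ref{lem:dioph} constrains.

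Next I would compare $df(z)=f(dz)$. The $M_{6n+4}$-part of this equation is formally identical to the one solved in Theorem~\ref{proposition: rigidalgebras}: matching the coefficient of $x_1^3x_2y_2y_3$-type monomials forces the analogue of \eqref{eq: factorescaibi}, so either $a_1=a_2=b_i=c=0$ or $a_1=a_2=b_1=b_2=b_3=c=1$; in the second case, matching the $x_1x_2$-divisible monomials shows $\sum y_iA_i$ is a cocycle hence, by Lemma~\ref{prop:dmz}, a coboundary, so $f(z)=z+d(m_z)$ up to the $x_v$-terms, which must themselves vanish by Lemma~\ref{lem:dioph}. Then I would turn to $df(z_v)=f(dz_v)$, i.e.\ comparing $d$ applied to $\sum_w c_{v,w}z_w + (\text{decomposables})$ with $f\big(x_v^3+\sum_{(v,w)\in E}x_vx_wx_2^{5n+3}\big)$. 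On the right one gets (using $a_2=1$ in the nonzero case) $\sum_w c_{v,w}\big(x_w^3+\sum_{(w,u)\in E}x_wx_ux_2^{5n+3}\big)$ on the left versus $\big(\sum_w\lambda_{v,w}x_w\big)^3+\sum_{(v,w)\in E}\big(\sum_u\lambda_{v,u}x_u\big)\big(\sum_t\lambda_{w,t}x_t\big)x_2^{5n+3}$ on the right. In the degenerate case $a_i=b_i=c=0$ one checks $\deg(f)=0$ via Lemma~\ref{lemma: ellipticminimal} and formula \eqref{dimensiontop} (the fundamental class involves $z$ and the $z_v$, and $f^\ast$ sends it to a multiple of itself with coefficient a product of the $c$'s and $c_{v,w}$'s which is forced to be $0$), giving alternative (2).

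In the non-degenerate case, matching the cube terms $x_w^3$ forces the matrix $(\lambda_{v,w})$ to be such that $\big(\sum_w\lambda_{v,w}x_w\big)^3=\sum_w c_{v,w}x_w^3$ as a polynomial identity in the $x_w$; since the $x_w^3$ are linearly independent and cross terms like $x_w^2x_u$ appear on the left, this forces each row of $(\lambda_{v,w})$ to have a single nonzero entry, say $\lambda_{v,\tau(v)}$ with $\lambda_{v,\tau(v)}^3=c_{v,\tau(v)}$ and all other $c_{v,w}=0$; a further consistency check (the map must be injective, as $\deg(f)\ne0$ via Proposition~\ref{prop:hopf}) shows $\tau$ is a bijection of $V(\mathcal{G})$. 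Then matching the remaining terms $x_vx_wx_2^{5n+3}$ shows that $(v,w)\in E(\mathcal{G})$ forces $(\tau(v),\tau(w))\in E(\mathcal{G})$ and conversely, so $\tau=:\sigma\in\Aut(\mathcal{G})$; finally rescaling the $x_v$ (absorbing the scalars $\lambda_{v,\sigma(v)}$) and noting these scalars must in fact be $1$ by the cube relation combined with the edge relation and the already-established $a_2=1$, one gets $f(x_v)=x_{\sigma(v)}=f_\sigma(x_v)$, and $f(z_v)=f_\sigma(z_v)+d(m_{z_v})$ after absorbing the cocycle correction terms (which are coboundaries by the argument of Lemma~\ref{prop:dmz} together with Lemma~\ref{lem:dioph}). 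This yields alternative (1), and the ``in particular'' clause follows since $f(z)=f_\sigma(z)+d(m_z)$ and $f(z_v)=f_\sigma(z_v)+d(m_{z_v})$ precisely say $f\simeq f_\sigma$ in $[M_n(\mathcal{G}),M_n(\mathcal{G})]$.

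The main obstacle I expect is the bookkeeping in the $df(z_v)=f(dz_v)$ comparison: one must carefully argue that no ``mixed'' contributions (from $f(z)$'s $y_iA_i$-terms interacting with $dz_v$, or from decomposable corrections to $f(x_v)$) survive, and that the polynomial identity $\big(\sum\lambda_{v,w}x_w\big)^3=\sum c_{v,w}x_w^3$ genuinely forces a permutation rather than merely a monomial matrix up to roots of unity — this is where the rationality of the coefficients and the structure of the edge term $x_vx_wx_2^{5n+3}$ must be used jointly to eliminate the scalars. The interplay with $\deg(f)\ne 0$ (Proposition~\ref{prop:hopf}) is what ultimately guarantees the bijectivity of $\sigma$ and rules out partial collapses, so threading that through all the cases is the delicate part; the $M_{6n+4}$-part, by contrast, is essentially a transcription of Theorem~\ref{proposition: rigidalgebras} and should be routine.
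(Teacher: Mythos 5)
Your proposal follows the same overall route as the paper's proof, but there are two genuine gaps that you would have to close before the argument is complete.

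First, your claim that Lemma~\ref{lem:dioph} (first statement) rules out all polynomial terms in $x_1,x_2$ inside $f(x_v)$ is a misreading. That lemma only asserts that $|x_v|$ is not expressible as $\alpha|x_1|+\beta|x_2|$ with both $\alpha,\beta$ \emph{positive}; it says nothing about pure powers. Indeed one checks $|x_v|=(5n+3)|x_2|$, so the monomial $x_2^{5n+3}$ has exactly degree $|x_v|$ and cannot be excluded at this stage. The paper writes $f(x_v)=\sum_w a(v,w)x_w + a_2(v)x_2^{5n+3}+\cdots$ and only shows $a_2(v)=0$ much later, by observing that $f(dz_v)$ would otherwise contain a summand $x_{\sigma(v)}^2 x_2^{5n+3}$, which is impossible in $df(z_v)$ because $\mathcal{G}$ has no loops. (Pure powers of $x_1$ are separately excluded by a direct divisibility check, not by Lemma~\ref{lem:dioph}.) Second, and more substantially, you omit the decomposable summands $A(v)y_1y_2+B(v)y_1y_3+C(v)y_2y_3$ with $A(v),B(v),C(v)\in\mathbb{Q}[x_1,x_2]$ in $f(x_v)$; these live in degree $|x_v|$ (each $y_iy_j$ is even) and are \emph{not} ruled out by any degree argument. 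The paper invests a nontrivial computation, starting from $df(x_v)=0$ and comparing the $y_1$, $y_2$, $y_3$ blocks, to show these summands combine into $d\bigl(P(v)y_1y_2y_3\bigr)$; then a further comparison with $df(z_v)=f(dz_v)$ (using that $df(z_v)$ has no term containing both $x_{\sigma(v)}^2$ and a $y_i$) forces $P(v)=0$. Without this, the cube $\bigl(f(x_v)\bigr)^3$ and the edge products in $f(dz_v)$ carry extra contributions that your matching of cube terms does not address.

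Aside from these gaps your strategy is essentially the paper's: use degree isolation to pin down $f$ on $x_1,x_2,y_i$, recycle the $M_{6n+4}$-computation to get $a_1=a_2=b_i=c\in\{0,1\}$, invoke Proposition~\ref{prop:hopf} to discard the degenerate case, then match monomials in $df(z_v)=f(dz_v)$ — ruling out $x_ux_vx_w$ (pairwise distinct) and $x_w^2x_v$ ($w\ne v$) to get exactly one nonzero $a(v,\sigma(v))$ per row — and finally use the edge relation $a(v,\sigma(v))^2=a(w,\sigma(w))$ plus connectedness of $\mathcal{G}$ to force all scalars to $1$ and $\sigma\in\Aut(\mathcal{G})$. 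You also invert the logical order in the degenerate case (you try to show directly that $a_1=0$ implies $\deg(f)=0$ via formula \eqref{dimensiontop}, whereas the paper simply contraposes: assume $\deg(f)\ne0$, use Proposition~\ref{prop:hopf} to get injectivity of $H^*(f)$, conclude $a_1\ne0$). The paper's contraposed version is cleaner because it avoids computing the action on the fundamental class explicitly.
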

\begin{proof}
We suppose that $\deg(f)\ne 0$, that is, $f$ does not fall into case (2). We have to show that $f$ falls into case (1).  	The element $f\in \End \big({M}_n(\mathcal{G})\big)$ is uniquely determined by the image of generators of ${M}_n(\mathcal{G})$.  	
First, we compute $f(x_1)$, $f(x_2)$, $f(y_1)$, $f(y_2)$, and $f(y_3)$.  For degree reasons (see Lemma \ref{lemma: isolated}) we have:
\begin{equation}\label{eq:f-parte-rigida}
    f(x_1) = a_1 x_1, \,
	f(x_2) = a_2 x_2, \,
	f(y_1) = b_1 y_1,\,
	f(y_2) = b_2 y_2,\,
	f(y_3) = b_3 y_3,
\end{equation}
for $a_i, b_j \in \Q$.
	Since $df(y_i)=f(dy_i)$ for $i=1,2,3$, we get that
	\begin{equation}\label{eq:relxiyi}
	b_1 = a_1^3 a_2, \quad  b_2 = a_1^2 a_2^2, \quad \text{and} \quad b_3 = a_1 a_2^3.
	\end{equation}
		
	Then, we continue with $f(x_v)$, for $v \in V (\mathcal G )$.  Observe that no pure power of $x_1$ can be a summand in $f(x_v)$. Indeed $|x_v| = |x_1|(6n+3)+(20n+12)$, with $20n+12 <  |x_1|$, for every $n\ge 1$, which means that $|x_1|$ does not divide $|x_v|$.   Secondly, no summand of the form $x_1^\alpha x_2^\beta$, with $\alpha$ and $\beta$ both positive can appear in $f(x_v) $, since $ |x_v|$ cannot be expressed as  $\alpha |x_1| + \beta  |x_2| $, with $\alpha, \beta$ both positive, by Lemma \ref{lem:dioph}. Then,
	\begin{equation}\label{eq:fxvinicial}
	f(x_v) = \sum_{w\in V (\mathcal G )} a(v,w) x_w + a_2(v) x_2^{5n+3} + A(v) y_1 y_2 + B(v) y_1 y_3 + C(v) y_2 y_3,
	\end{equation}
	where $a(v,w), a_2(v) \in \Q$ and $A(v), B(v), C(v) \in \mathbb{Q}[x_1,x_2]$, for all $v,w\in V(\G)$. Note that no term of the form $Ay_1y_2y_3$ or $Ay_i$, for $A\in\mathbb{Q}[x_1,x_2]$, can appear in the expression of $f (x_v)$ (see Remark \ref{rem:parity}).  Now, since $df(x_v)=f(dx_v)=0$, we get
	\[0 = A(v)(x_1^3 x_2 y_2 - x_1^2 x_2^2 y_1) + B(v)(x_1^3 x_2 y_3 - x_1 x_2^3 y_1) + C(v)(x_1^2 x_2^2 y_3 - x_1 x_2^3 y_2).\]
	In this equation the sum of the terms having $y_1$ as a factor, respectively $y_2$, $y_3$,  must all be zero. We start by paying attention to coefficients in $y_1$. As
	\[0 = A(v) x_1^2 x_2^2 + B(v) x_1 x_2^3 = x_1 x_2^2 (A(v) x_1 + B(v) x_2), \]
	we infer that $A(v)x_1 + B(v)x_2 = 0$, and therefore that $x_1$ must be a factor in $B(v)$ and $x_2$ a factor in $A(v)$. Therefore  $B(v)=\bar{B}(v)  x_1$  for some $\bar{B}(v)\in\mathbb{Q}[x_1,x_2]$, $A(v)=\bar{A}(v) x_2$ for some $\bar{A}(v)\in \mathbb{Q}[x_1,x_2]$ and $\bar{A}(v)x_1x_2 + \bar{B}(v) x_1x_2 = 0$, so that $\bar{A}(v)=-\bar{B}(v)$. We continue by paying attention to coefficients in $y_2$. As \[0 = A(v)x_1^3 x_2 - C(v) x_1 x_2^3 =  x_1 x_2 (A(v) x_1^2 - C(v)x_2^2),\]
	then $A(v) x_1^2 - C(v) x_2^2 = 0$.  But  $A(v)=\bar{A}(v)x_2$, so
	\[0 = \bar{A}(v) x_1^2 x_2 - C(v) x_2^2 = x_2(\bar{A}(v) x_1^2 - C(v) x_2) .\]
	The same kind of arguments as above,  imply now that $\bar{\bar{A}}(v)x_2 = \bar{A}(v)$ and $\bar{C}(v) x_1^2 = C(v)$, for some $\bar{\bar{A}}(v), \bar{C}(v)\in \mathbb{Q}[x_1,x_2]$.Then  $\bar{\bar{A}}(v) x_1^2 x_2 - \bar{C}(v) x_1^2 x_2 =0,$
	so $\bar{\bar{A}}(v)=\bar{C}(v)$.
Finally, we pay attention to coefficients in $y_3$. As
	\[0= B(v) x_1^3 x_2 + C(v) x_1^2 x_2^2 = x_1^2 x_2 (B(v) x_1 + C(v) x_2), \]
	we infer that $B(v) x_1 + C(v) x_2=0$. Since $B(v)=\bar{B}(v) x_1$ and $C(v) = \bar{C}(v) x_1^2$,
	\[0=\bar{B}(v) x_1^2 + \bar{C}(v) x_1^2 x_2 = x_1^2(\bar{B}(v) + \bar{C}(v) x_2),\]
	so $\bar{\bar{B}}(v) x_2 = \bar{B}(v)$, for some $\bar{\bar{B}}(v)\in\mathbb{Q}[x_1,x_2]$. Then $0=\bar{\bar{B}}(v) x_2 + \bar{C}(v)x_2$, and so $\bar{\bar{B}}(v)=-\bar{C}(v)$.
		
	Summarizing, we have
	$A(v) = \bar{C}(v) x_2^2$, $B(v) = -\bar{C}(v)x_1 x_2$ and $C(v)=\bar{C}(v) x_1^2$, hence \eqref{eq:fxvinicial} can be rewritten as:
	\[f(x_v) = \sum_{w\in V(\G)} a(v,w) x_w + a_2(v) x_2^{5n+3} + \bar{C}(v)(x_2^2 y_1 y_2 - x_1 x_2 y_1 y_3 + x_1^2 y_2 y_3).\]
	Let us now prove that $\bar{C}(v) = P(v) x_1 x_2$, with $P(v)\in\mathbb{Q}[x_1,x_2]$. Notice that if we succeed, as $d(y_1y_2y_3) = x_1^3 x_2 y_2 y_3 - x_1^2 x_2^2 y_1 y_3 + x_1 x_2^3 y_1 y_2$,  the last summand in $f (x_v)$ can be expressed as $d(P(v)y_1y_2y_3)$.   It suffices to prove that $|x_1|$ and $|x_2|$ are not divisors of the degree of $\bar{C}(v)$. This is clear as for $|\bar{C}(v)| = |x_v| - 2|x_2| - |y_1| - |y_2| = 180n^2 - 112n - 132 = |x_1|(6n-8) + (20n + 12)$, with $0 < 20n+12 < |x_1|$, for $n\ge 1$ and also for $ |\bar{C}(v)|= |x_2|(5n-7) + (30n + 22)$  with $0 < 30n+22 < |x_2|$ which leads us to the conclusion that
	$\bar{C}(v)$ cannot be a pure power of either $x_1$ or $x_2$. Then
	\begin{equation}\label{eq:fxvfinal}
	f(x_v) = \sum_{w\in V(\G)} a(v,w) x_w + a_2(v) x_2^{5n+3} + d\big(P(v)y_1 y_2 y_3\big).
	\end{equation}
		
{Third, we determine $f(z)$}. Remark that terms of the form $Py_1y_2y_3$ with $P\in \mathbb{Q}[x_1,x_2, x_v]$ cannot appear in the expression  of $f(z)$, since by Lemma \ref{lem:dioph} there is no polynomial $Q\in\mathbb{Q}[x_1,x_2]$ such that $|Qx_v^my_1y_2y_3|=|z|$, for $m = 0, 1, 2$. It is then clear that
\begin{equation}\begin{aligned}\label{eq:fzinicial}
	f(z) =&\ cz + \sum_{w\in V(\G)}  c(w)z_w  + A_1 y_1 + A_2 y_2 +  A_3 y_3 \\
	&+ \sum_{w\in V(\G)} x_w \big( B_1(w)  y_1 +  B_2(w) y_2 + B_3(w) y_3\big)\\
	&+\sum_{\{v,w\}\subset V(\G)} x_v x_w  \big(C_1(v,w)y_1 + C_2(v,w)y_2 + C_3(v,w)y_3\big),
\end{aligned}\end{equation}
	where $c, c(w) \in \Q$, $A_i$, $B_i(w)$, $C_i(v,w) \in \Q [x_1, x_2]$ for all $v,w\in V(\G)$, $i=1,2,3$.
	Since $df(z)$ and $f(dz)$ must coincide, on one hand using \eqref{eq:f-parte-rigida}:
\begin{equation}\begin{aligned}\label{eq:fdz}
	f(dz)  = &  b_1 b_2 a_1^{18n} a_2^2  y_1 y_2 x_1^{18n}x_2^2 - b_1 b_3 a_1^{18n+1} a_2 y_1y_3 x_1^{18n+1}x_2 \\
	& + b_2b_3 a_1^{18n+2}y_2y_3 x_1^{18n+2} + a_1^{18n+11} x_1^{18n+11} + a_2^{15n+9} x_2^{15n+9},
	\end{aligned}\end{equation}
	and on the other hand:
	\begin{equation}\begin{aligned}\label{eq:dfz}
	 df(z) =& c\big(x_1^{18n}(x_1^2y_2y_3 - x_1 x_2 y_1 y_3 + x_2^2 y_1 y_2) +  x_1^{18n+11} + x_2^{15n+9}\big) \\
	& + \sum_{w\in V(\G)} c(w) \left[ x_w^3 + \sum_{(w,u)\in E(\G)} x_w x_u x_2^{5n+3}\right]\\
    &+ A_1x_1^3 x_2 + A_2 x_1^2 x_2^2 + A_3 x_1 x_2^3 \\
	& + \sum_{w\in V(\G)} x_w\big[B_1(w) x_1^3 x_2 + B_2(w) x_1^2 x_2^2 + B_3(w) x_1 x_2^3\big]\\
	& + \sum_{\{u,w\}\subset V(\G)} x_u x_w \big[C_1(u,w) x_1^3 x_2 + C_2(u,w) x_1^2 x_2^2 + C_3(u,w) x_1 x_2^3\big].
	\end{aligned}\end{equation}
	Now notice that on \eqref{eq:fdz} there are no terms $x_1^\alpha x_2^\beta$ for $\alpha, \beta >0$, whereas in \eqref{eq:dfz} there are. This leads to $A_1x_1^3 x_2 + A_2 x_1^2 x_2^2 + A_3 x_1 x_2^3 = 0$ and thus to $d(\sum_{i=1}^3 A_i y_i)= 0$. Moreover, in \eqref{eq:fdz} there are no terms at all involving $x_w$, so  we obtain $c(w)=0$, $
B_1(u,w)x_1^3 x_2 + B_2(u,w) x_1^2 x_2^2 + B_3(u,w) x_1 x_2^3 = 0$, and $ C_1(u,w)x_1^3 x_2 + C_2(u,w) x_1^2 x_2^2 + C_3(u,w) x_1 x_2^3 = 0$
 for all $u,w\in V(\G)$ in \eqref{eq:dfz}. Hence $d\big(\sum_{i=1}^3  B_i(w) y_i\big)=0$ and $d\big(\sum_{i=1}^3 C_i(u,w)y_i\big)= 0$, and by Lemma \ref{prop:dmz}, we can finally reduce the expression to:
	\begin{equation} \label{eq:fzfinal}
	f(z) = cz + d(m_z), \quad m_z \in \big({M}_n(\mathcal{G})\big)^{|z|-1}.
	\end{equation}
	
	Before computing the last image $f(z_v)$, $v\in V(\G)$, we work out here the value of the remaining constants. Compare again \eqref{eq:fdz} to \eqref{eq:dfz} and  invoke \eqref{eq:relxiyi} to obtain:
	\[\left\{\begin{array}{l}
	c = a_1^{18n+2} b_2 b_3 = a_1^{18n+5}a_2^5, \\
	c = a_1^{18n+1} a_2 b_1 b_3 = a_1^{18n+5}a_2^5, \\
	c = a_1^{18n} a_2^2 b_1 b_2 = a_1^{18n+5} a_2^5, \\
	c = a_1^{18n+11}, \\
	c = a_2^{15n+9},
	\end{array}\right.\]
what is a particular case of Equation \eqref{eq: factorescaibi} for $k=6n+4$. Therefore either $a_1=a_2=b_1=b_2=b_3=c=0$ or $a_1=a_2=b_1=b_2=b_3=c=1$.
	Since $\deg(f)\ne 0$, we know that $f\in\E(M_n(\G))$ (Proposition \ref{prop:hopf}), and therefore $a_1\ne 0$, from which it follows that $a_1=a_2=b_1=b_2=b_3=c=1$.
	
	Finally, we end by computing $f(z_v)$, $v\in V(\G)$. Since $|z_v| = |z|$, the degree reasons used to describe $f(z)$  in \eqref{eq:fzinicial} apply, and $f(z_v)$ can be expressed as:
	\begin{equation}\label{eq:fzvinicial}\begin{aligned}
	f(z_v)= e(v)& z +  \sum_{w\in V(\G)} c(v,w) z_w + G_1(v) y_1 + G_2(v) y_2 + G_3(v) y_3 \\
	+ & \sum_{w\in V(\G)} x_w\big(H_1(v,w)y_1 + H_2(v,w) y_2 + H_3(v,w) y_3\big) \\
	+ & \sum_{\{u,w\}\subset V(\G)} x_u x_w \big(I_1(v,u,w)y_1 + I_2(v,u,w)y_2 + I_3(v,u,w)y_3\big),
	\end{aligned}\end{equation}
	with $e(v), c(v,w)\in \Q$, and $G_i(v), H_i(v,w), I_i(v,u,w) \in \Q[x_1,x_2]$, for all $v,u,w\in V(\G)$, $i=1,2,3$.

	Then,
\begin{equation}
\begin{aligned}\label{eq:dfzv}
	df(z_v)  =&\ e(v) \left[ x_1^{18n}(x_1^2 y_2 y_3 - x_1 x_2 y_1 y_3 + x_2^2 y_1 y_2) + x_1^{18n+11} + x_2^{15n+9} \right] \\
	& + \sum_{w\in V(\G)} c(v,w) \left( x_w^3 + \sum_{(w,u)\in E(\G)} x_w x_u x_2^{5n+3}\right) \\
	& + G_1(v) x_1^3 x_2 + G_2(v) x_1^2 x_2^2 + G_3(v) x_1 x_2^3 \\
	& + \sum_{w\in V(\G)} x_w \big(H_1(v,w)x_1^3 x_2 + H_2(v,w)x_1^2 x_2^2 + H_3(v,w) x_1 x_2^3\big) \\
	& + \sum_{\{w,u\}\subset V(\G)} x_w x_u \big(I_1(v,u,w)x_1^3 x_2 + I_2(v,u,w) x_1^2 x_2^2 + I_3(v,u,w) x_1 x_2^3\big).
	\end{aligned}
\end{equation}
	On the other hand, using \eqref{eq:fxvfinal}
	\begin{equation}\label{eq:fdzv}\begin{aligned}
	f(dz_v)  = \Biggl( & \sum_{w\in V(\G)}  a(v,w) x_w + a_2(v) x_2^{5n+3} + d\big(P(v)y_1y_2y_3\big) \Biggr)^3 \\
	+ \sum_{(v,r)\in E(\G)} &\left(\sum_{w\in V(\G)} a(v,w) x_w + a_2(v) x_2^{5n+3} + d\big(P(v)y_1y_2y_3\big) \right) \\
	& \times \left( \sum_{u\in V(\G)} a(r,u)x_u + a_2(r) x_2^{5n+3} + d\big(P(r)y_1y_2y_3\big)\right) x_2^{5n+3}.
	\end{aligned}\end{equation}
First, notice that there is no summand in \eqref{eq:dfzv} of the form $x_v x_w x_u$, where $u$, $v$ and $w$ are pairwise distinc. This forces \eqref{eq:fdzv} to have at most two nontrivial coefficients $a(v,w)$. Furthermore, in \eqref{eq:dfzv} there is no factor of the form $x_w^2 x_v$, where $w\ne v$, which forces exactly one non trivial coefficient $a(v,w)$   in \eqref{eq:fdzv} (recall that $f\in\E(M_n(\G))$, hence at least one coefficient $a(v,w)$ must be nontrivial). Consequently,  in \eqref{eq:fdzv} there is a unique summand containing $x_w^3$ and, in \eqref{eq:dfzv} there can only be one non trivial coefficient $c(v,w)$.  This fact can be read as there is a self-map of $V(\G)$, $\sigma$, such that $a\big(v,\sigma(v)\big)$ and $c\big(v,\sigma(v)\big)$ are the only non trivial coefficients.
	Second, notice that in \eqref{eq:fdzv} there is no factor that is a pure power of $x_1$, and thus, $e(v)=0$ in \eqref{eq:dfzv}. Also, since the graph has no loops, a term of type $x_{\sigma(v)}^2 x_2^{5n+3}$ does not appear in \eqref{eq:dfzv}, and $a\big(v,\sigma(v)\big)\ne 0$ implies that $a_2(v) = 0$. We focus now on \eqref{eq:fdzv}. Notice that
	\[d\big(P(v)y_1y_2y_3\big) = P(v)(x_1^3 x_2 y_2 y_3 - x_1^2 x_2^2 y_1 y_3 + x_1 x_2^3 y_1 y_2),\]
	and if two terms of this form are multiplied, their product is zero ($y_i^2 =0$). Thus,
\begin{equation}
\begin{aligned}\label{eq:dfzv_2}
	f(dz_v) = &\ a\big(v,\sigma(v)\big)^3 x_{\sigma(v)}^3 + 3 a\big(v,\sigma(v)\big)^2 x_{\sigma(v)}^2 d\big(P(v)y_1 y_2 y_3\big) \\
	+ &\ \sum_{(v,r)\in E(\G)} \biggl[ a\big(v,\sigma(v)\big) a\big(r,\sigma(r)\big) x_{\sigma(v)} x_{\sigma(r)}\\
	+ &\  a\big(v,\sigma(v)\big) x_{\sigma(v)} d\big(P(r) y_1 y_2 y_3\big) \\
    + &\ a\big(r,\sigma(r)\big) x_{\sigma(r)} d\big(P(v) y_1 y_2 y_3\big) \biggr] x_2^{5n+3}.
	\end{aligned}
\end{equation}
	Observe that only one term containing $x_{\sigma(v)}^2$ appears in \eqref{eq:dfzv_2}, which is multiplied by $d\big(P(v)y_1 y_2 y_3\big) = P(v)(x_1^3 x_2 y_2 y_3 - x_1^2 x_2^2 y_1 y_3 + x_1 x_2^3 y_1 y_2)$. However, in \eqref{eq:dfzv} there is no term containing both $x_{\sigma(v)}^2$ and $y_i$.  Therefore, we can conclude that $P(v)=0$, for every $v\in V(\G)$, and hence
\begin{equation}\label{eq:dfzv_3}
f(dz_v) = a\big(v,\sigma(v)\big)^3 x_{\sigma(v)}^3 + \sum_{(v,r)\in E(\G)} a\big(v,\sigma(v)\big) a\big(r,\sigma(r)\big) x_{\sigma(v)} x_{\sigma(r)} x_2^{5n+3}.
\end{equation}
	Now, comparing  terms containing $x_1 x_2$, $x_1 x_2 x_w$ and $x_1 x_2 x_w x_u$  in \eqref{eq:dfzv} and \eqref{eq:dfzv_3}, we get:
	\[
	\left\{\begin{array}{l}
	G_1(v) x_1^3 x_2 + G_2(v) x_1^2 x_2^2 + G_3(v) x_1 x_2^3 = 0 ,\\
	H_1(v,w)x_1^3 x_2 + H_2(v,w)x_1^2 x_2^2 + H_3(v,w) x_1 x_2^3 = 0, \\
	I_1(v,u,w)x_1^3 x_2 + I_2(v,u,w) x_1^2 x_2^2 + I_3(v,u,w) x_1 x_2^3 = 0.
	\end{array}\right.
	\]
	In other words $\sum_{i=1}^3 y_iG_i(v)$, $\sum_{i=1}^3 y_iH_i(v,w) $ and $\sum_{i=1}^3 y_i I_i(v,u,w) $ are cocycles in degrees $|z|$, $|z|-|x_v|$ and $|z|-2|x_v|$ respectively. We now invoke Lemma \ref{prop:dmz} to conclude that they are also coboundaries.  Then, we can finally write
	$$f(z_v) = c(v,\sigma(v)) z_{\sigma(v)} + d(m_{z_v}),$$ for some $m_{z_v}\in \big(M_n(\G)\big)^{|z|-1}$,
	and
\begin{equation}\label{eq:dfzv_4}
df(z_v)  =  c(v,\sigma(v)) \left( x_{\sigma(v)}^3 + \sum_{(\sigma(v),u)\in E(\G)} x_{\sigma(v)} x_u x_2^{5n+3}\right).
\end{equation}
	Comparing both \eqref{eq:dfzv_3} and \eqref{eq:dfzv_4}, we get
		\[\begin{array}{ll}
	c\big(v,\sigma(v)\big) = a\big(v,\sigma(v)\big)^3, & \text{for all $v\in V(\G)$,} \\
	c\big(v,\sigma(v)\big) = a\big(v,\sigma(v)\big) a\big(w,\sigma(w)\big), & \text{for all $(v,w)\in E(\G)$.}
	\end{array}\]
	Therefore $a\big(v,\sigma(v)\big)^2 = a\big(w,\sigma(w)\big)$, if $(v,w)\in E(\G)$. Observe that since $\mathcal{G}$ is not a directed graph, $(w,v)$ is also an edge of $\mathcal G$, then  $a\big(w,\sigma(w)\big)^2 = a\big(v,\sigma(v)\big)$ and $a\big(v,\sigma(v)\big)^4 = a\big(v,\sigma(v)\big)$. Moreover, since $\mathcal{G}$ is connected, and $a\big(v,\sigma(v)\big)\ne 0$, then $a\big(v,\sigma(v)\big) = c\big(v,\sigma(v)\big) = 1$ for all $v\in V(\G)$.  Therefore we can write:
		\begin{equation*}\begin{aligned}
		f(x_v) &= x_{\sigma(v)}, \\
		f(z_v) &= z_{\sigma(v)} + d(m_{z_v}).
		\end{aligned}\end{equation*}
		We shall see that in this case $\sigma\colon V(\G)\to V(\G)$ is, in fact, an element in $\Aut(\mathcal{G})$. We first show that $\sigma$ is a full endomorphism of the graph $\G$, that is, that $(v, w)\in E(\G)$ if and only if $\big(\sigma(v), \sigma(w)\big)\in E(\G)$. Indeed, $(v, w)\in E(\G)$ if and only if there is a summand $x_v x_w x_2^{5n+3}$ in $dz_v$,  and this is equivalent to there being a summand $x_{\sigma(v)} x_{\sigma(w)} x_2^{5n+3}$ in $f(dz_v)=df(z_v)=d(z_{\sigma(v)})$,  or equivalently, $\big(\sigma(v),\sigma(w)\big)\in E(\G)$. Now, since for every $v\in V(\G)$, $f(dz_v)=d(z_{\sigma(v)})$, $\sigma$ is one-to-one on the neighborhood of every vertex. Thus $\sigma \in \Aut(\mathcal{G})$, which finally proves that if  $\deg(f)\ne 0$, then  $f$ falls into case (1) of Lemma \ref{lemma: principal}.
\end{proof}

Gathering  Lemmas \ref{lemma: aut}, and \ref{lemma: principal},  we have
succeeded in proving the following:
\begin{lemma}\label{th:sullgraph}
 The monoid of homotopy classes of self-maps of $M_n(\G)$ is:
$$[M_n(\G),M_n(\G)] \cong \Aut (\G) \sqcup \{f \colon\,\deg(f)=0\} .$$
\end{lemma}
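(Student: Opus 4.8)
The plan is to combine the two lemmas that have already been proved, Lemma~\ref{lemma: aut} and Lemma~\ref{lemma: principal}, to obtain the description of the monoid $[M_n(\G),M_n(\G)]$. First I would recall that every element of $[M_n(\G),M_n(\G)]$ is represented by some $f\in\End(M_n(\G))$, and conversely two such morphisms represent the same homotopy class precisely when they are connected by a homotopy. By Lemma~\ref{lemma: principal}, each $f$ either is homotopic to $f_\sigma$ for a (unique, by Lemma~\ref{lemma: aut}) $\sigma\in\Aut(\G)$, or satisfies $\deg(f)=0$. Since $\deg$ is a homotopy invariant (it is defined on $[M_n(\G),M_n(\G)]$, cf.\ Definition~\ref{def:degree}), the two alternatives partition the monoid, so I would write $[M_n(\G),M_n(\G)] = \mathcal{A}\sqcup\mathcal{B}$ with $\mathcal{A}$ the classes of degree-$0$ self-maps and $\mathcal{B}$ the classes admitting a representative $f_\sigma$.

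Next I would identify $\mathcal{B}$ with $\Aut(\G)$. The assignment $\sigma\mapsto[f_\sigma]$ is well-defined and surjective onto $\mathcal{B}$ by the above; it is injective by the second part of Lemma~\ref{lemma: aut}, which says $f_{\sigma_1}\simeq f_{\sigma_2}$ iff $\sigma_1=\sigma_2$. One must also check that $\mathcal{A}$ and $\mathcal{B}$ are genuinely disjoint: a class in $\mathcal{B}$ is represented by $f_\sigma$, which is an automorphism of $M_n(\G)$ by Lemma~\ref{lemma: aut}, hence a self-homotopy equivalence, hence of nonzero degree by Proposition~\ref{prop:hopf} (the algebra is elliptic by Lemma~\ref{lemma: ellipticminimal}); so no element of $\mathcal{B}$ lies in $\mathcal{A}$. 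It remains to observe that $\sigma\mapsto[f_\sigma]$ is a monoid homomorphism, i.e.\ that $f_{\sigma_1}\circ f_{\sigma_2}=f_{\sigma_1\sigma_2}$, which is immediate from the formulas $f_\sigma(x_v)=x_{\sigma(v)}$, $f_\sigma(z_v)=z_{\sigma(v)}$ and $f_\sigma=\id$ on the remaining generators. This gives the claimed isomorphism $[M_n(\G),M_n(\G)]\cong\Aut(\G)\sqcup\{f:\deg(f)=0\}$ as a decomposition of the monoid into the submonoid (in fact subgroup) $\Aut(\G)$ and the two-sided ideal of degree-$0$ classes.

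I do not expect any serious obstacle, since all the substantive work is contained in Lemma~\ref{lemma: aut} and Lemma~\ref{lemma: principal}; the present statement is essentially a repackaging. The only point demanding a little care is the honest verification that the decomposition is a coproduct of sets in the stated way: that every class lands in exactly one piece (handled by degree being a homotopy invariant together with the disjointness argument above) and that the $\Aut(\G)$ piece carries its group structure compatibly with composition of self-maps. Accordingly, a short proof suffices: cite Lemma~\ref{lemma: principal} for the dichotomy, cite Lemma~\ref{lemma: aut} for the bijection between $\Aut(\G)$ and the non-degenerate classes and for its multiplicativity, and invoke Proposition~\ref{prop:hopf} (with Lemma~\ref{lemma: ellipticminimal}) to rule out overlap, concluding $[M_n(\G),M_n(\G)]\cong\Aut(\G)\sqcup\{f:\deg(f)=0\}$.
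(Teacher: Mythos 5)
Your proposal is correct and takes essentially the same approach as the paper; the paper in fact gives no explicit proof at all, stating only that the lemma follows by ``gathering'' Lemma~\ref{lemma: aut} and Lemma~\ref{lemma: principal}, so your write-up simply makes explicit the bookkeeping (disjointness via Proposition~\ref{prop:hopf}, injectivity and multiplicativity of $\sigma\mapsto[f_\sigma]$) that the authors leave implicit.
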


As a consequence, we obtain our main result, Theorem \ref{teor:sullgraph}.

\emph{Proof of Theorem \ref{teor:sullgraph}:} From Proposition \ref{prop:hopf}, we know that self-maps $f$ with $\deg(f)=0$ are not self-homotopy equivalences. Then, using Lemma \ref{lemma: aut}, we conclude that $\Aut (\G)\cong\E (M_n(\G))$, for every $n>1$.

\section{Inflexible manifolds} \label{sec: inflex}

A closed, oriented and connected manifold $M$ is said to be inflexible if the set of all the possible degrees of its continuous self-maps is finite. As the degree is multiplicative, this condition is equivalent to ask for the set of all the possible degrees to be a subset of $\{-1, 0, 1\}$.
Inflexible manifolds naturally appear within the framework of functorial seminorms on singular homology developed by Gromov \cite{Gromov, Gromov2} and derived degree theorems (e.g.\ \cite[Remark 2.6]{CL}): let $M$ be a closed, oriented and connected manifold with fundamental class $c_M$. If there exists a functorial seminorm on singular homology $\vert\cdot\vert$ such that $\vert c_M\vert\ne 0$, then $M$ is inflexible. In this way, oriented closed connected hyperbolic manifolds are shown to be inflexible; they do have non trivial simplicial volume, the value of ${\ell}^1$-seminorm applied to the fundamental class \cite[Section 0.3]{Gromov2}. But ${\ell}^1$-seminorm is trivial on simply connected manifolds \cite[Section 3.1]{Gromov2}, which led Gromov to raise the question of whether
every functorial seminorm on singular
homology is trivial on all simply connected spaces \cite[Remark (b) in 5.35]{Gromov}. This question is solved in the negative in \cite{CL} by constructing functorial seminorms associated to simply connected inflexible manifolds. Therefore, inflexible manifolds are extraordinary objects and still not many examples are known. Indeed, all the examples found in literature show low levels of connectivity when observing their minimal Sullivan models \cite{AL2,Am,CL,CV2}.

In this section, we closely follow the lines of \cite{CV2} and provide new examples of inflexible manifolds whose Sullivan models are as highly connected as desired (see Corollary \ref{cor:inflexman}). To that purpose, we first need to recall some tools.
	
 First, we recall that for elliptic Sullivan algebras, the Barge and Sullivan obstruction theory (\cite{Ba,Su}) decides if there exists a manifold (over $\mathbb Z$) of the same rational homotopy type of that algebra. Roughly speaking, the obstruction theory is trivial when the formal dimension of the algebra is not congruent to $0\pmod{4}$. Our examples will fall into this case.

Secondly, we recall a construction from  \cite[Proposition 3.1, Lemma 3.2]{CV2}.  Let  $A$ be a $1$-connected elliptic Sullivan algebra of formal dimension $2m$. Then,
 we can construct a $1$-connected elliptic Sullivan algebra  of formal dimension $4m-1$ as follows: we choose a representative of the fundamental class of $A$, let us say $x$ with $|x| = 2m$ and we define $\tilde A = (A \otimes \Lambda (z), d z = x)$.  The algebra $\tilde A$ inherits properties of $A$: it is elliptic, the monoid of self-maps of $\tilde A$ coincides with the monoid of self-maps of $A$, and finally, the connectivity is preserved.

At this point, we wish to explain our strategy. We have proved that ${M}_n(\mathcal{G})$ from Definition \ref{def: m_nG}  is an elliptic Sullivan algebra of even dimension,  let us say $2m$, (see Lemma \ref{lemma: ellipticminimal}). By means of the construction above, we define a new elliptic Sullivan algebra $\tilde {M}_n(\mathcal{G})$ of formal dimension $4m-1$.  The monoid of self-maps of  both algebras  coincide, hence $\E\big(\tilde {M}_n(\mathcal{G})\big)$  is finite which implies that $\tilde {M}_n(\mathcal{G})$ is inflexible.  Also $\tilde {M}_n(\mathcal{G})$ and ${M}_n(\mathcal{G})$ have the same connectivity. Now, as we have already mentioned,  the obstruction of Barge and Sullivan is trivial  and $\tilde {M}_n(\mathcal{G})$ has the rational homotopy type of a manifold.  Even more,  by Proposition \ref{PascalDon}, $\tilde {M}_n(\mathcal{G})$ has the rational homotopy type of $\M_n(\G)$,  a closed smooth manifold with the same connectivity as $\tilde {M}_n(\mathcal{G})$.

We can now prove the main theorem in this section.
\begin{theorem}\label{thminflexman}
For any finite group $G$ and any integer $n \geq 1$, $G$ is the group of self-homotopy equivalences of the rationalization of an inflexible manifold which is  $(30n+17)$-connected.
\end{theorem}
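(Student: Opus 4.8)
The plan is to assemble the machinery of Section~\ref{sec:realiz} with the doubling construction recalled just above and the geometric realization provided by the Appendix. First I would invoke Frucht's theorem \cite{Frucht1} to fix a finite, connected, simple graph $\G$ with more than one vertex and $\Aut(\G)\cong G$. By Theorem~\ref{teor:sullgraph}, the algebra $M_n(\G)$ of Definition~\ref{def: m_nG} is then $(30n+17)$-connected with $\E\big(M_n(\G)\big)\cong\Aut(\G)$, and by Lemma~\ref{lemma: ellipticminimal} it is a $1$-connected elliptic Sullivan algebra of even formal dimension, say $2m$.

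Next I would apply the construction recalled before the statement (see \cite[Proposition 3.1, Lemma 3.2]{CV2}): choosing a cocycle representative $x$ of the fundamental class, $|x|=2m$, set $\tilde{M}_n(\G)=\big(M_n(\G)\otimes\Lambda(z),\,dz=x\big)$ with $|z|=2m-1$. This is again a $1$-connected elliptic Sullivan algebra, now of \emph{odd} formal dimension $4m-1$, with the same connectivity $(30n+17)$ and the same monoid of homotopy classes of self-maps as $M_n(\G)$. Combining this with Lemma~\ref{th:sullgraph}, every self-map of $\tilde{M}_n(\G)$ has degree $0$ or is homotopic to some $f_\sigma$, $\sigma\in\Aut(\G)$; since $\Aut(\G)$ is finite each $f_\sigma$ has finite order, so $\deg(f_\sigma)\in\Q$ is a root of unity, hence $\pm1$. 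Thus the set of degrees of self-maps of $\tilde{M}_n(\G)$ is contained in $\{-1,0,1\}$, and Proposition~\ref{prop:hopf} gives $\E\big(\tilde{M}_n(\G)\big)\cong\Aut(\G)\cong G$.

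Then I would realize $\tilde{M}_n(\G)$ geometrically. Its formal dimension $4m-1\equiv 3\pmod 4$ is not congruent to $0\pmod 4$, so the Barge--Sullivan obstruction \cite{Ba,Su} vanishes; more precisely, Proposition~\ref{PascalDon} furnishes a closed smooth manifold $\M_n(\G)$ of the same connectivity whose minimal Sullivan model is $\tilde{M}_n(\G)$. Its rationalization is therefore modelled by $\tilde{M}_n(\G)$, whence $\E\big((\M_n(\G))_\Q\big)\cong G$; and since the degree of any continuous self-map of $\M_n(\G)$ coincides with the degree in the sense of Definition~\ref{def:degree} of the induced self-map of the rationalization, it lies in $\{-1,0,1\}$. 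Hence $\M_n(\G)$ is an inflexible $(30n+17)$-connected manifold with $\E\big((\M_n(\G))_\Q\big)\cong G$, as required.

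I expect the only genuinely non-formal input to be this last geometric realization step: upgrading the rational model $\tilde{M}_n(\G)$ to an honest closed smooth manifold of the prescribed high connectivity. This is precisely why the Appendix (Proposition~\ref{PascalDon}) is included; everything else is bookkeeping built on Theorem~\ref{teor:sullgraph}, Lemma~\ref{lemma: ellipticminimal}, Lemma~\ref{th:sullgraph}, and the $\tilde{(\,\cdot\,)}$ construction, together with the elementary observation that a self-equivalence of finite order has degree a rational root of unity.
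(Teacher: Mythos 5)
Your proposal is correct and follows essentially the same route as the paper: Frucht's theorem, Theorem~\ref{teor:sullgraph} and Lemma~\ref{lemma: ellipticminimal} for $M_n(\G)$, the doubling construction from \cite{CV2} to get an odd-dimensional elliptic algebra $\tilde M_n(\G)$ with the same monoid of self-maps, and Proposition~\ref{PascalDon} to realize it by a smooth manifold. The one small cosmetic difference is that you deduce the degree bound $\{-1,0,1\}$ by observing that a finite-order self-equivalence has degree a rational root of unity, whereas the paper simply notes that $\E\big(\tilde M_n(\G)\big)$ being finite forces the set of degrees to be finite (and in fact the paper's Corollary~\ref{cor:chiral} uses $\deg(\tilde f)=\deg(\tilde f|_{M_n(\G)})^2$ from \cite[Lemma 3.2]{CV2} to pin the degrees down to $\{0,1\}$); these are equivalent, elementary observations.
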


\begin{proof}
Take $\G$ a graph such that $\Aut(\G)\cong G$ \cite{Frucht1}.  For any $n\ge 1$, consider the algebra $M_n(\G)$ from Definition \ref{def: m_nG}. We know from Theorem \ref{teor:sullgraph} that $\E\big(M_n(\G)\big)\cong \Aut(\G)$. We also know from Lemma \ref{lemma: ellipticminimal} that $M_n(\G)$ is an elliptic minimal Sullivan algebra of formal dimension an even integer $2\delta=(18n+22)\deg x_1 + 2 |V(\G)| \deg x_v$. We apply \cite[Proposition 3.1]{CV2} to obtain an elliptic Sullivan algebra of formal dimension $\tilde{\delta}=4\delta-1$, which we denote $\tilde M_n(\G)$, such that $\E\big(\tilde M_n(\G)\big) \cong \E\big(M_n(\G)\big)\cong G$,  by Lemma \ref{th:sullgraph}.
Since $\tilde M_n(\G)$ is elliptic, every non zero degree map must be a self-homotopy equivalence by Proposition \ref{prop:hopf}, and since $\E\big(\tilde M_n(\G)\big)$ is finite, the set of degrees is finite as well and $\tilde M_n(\G)$ is an inflexible Sullivan algebra.
Now $\tilde{\delta}\not\equiv 0\pmod{4}$,  so by Proposition \ref{PascalDon}, $\tilde{M}_n(\G)$ is the rationalization of a  $(30n+17)$-connected manifold $\M_n(\G)$. Observe that $\M_n(\G)$ is also inflexible since so is $\tilde M_n(\G)$.  Thus
\[G \cong \Aut(\G) \cong \E\big(M_n(\G)\big) \cong \E\big(\tilde M_n(\G)\big) \cong \E\big(\M_n(\G)_\Q\big),\]
where $\M_n(\G)_\Q$ is the rational homotopy type of $\M_n(\G)$.

\end{proof}
In particular, we immediately obtain the following.
\begin{corollary}\label{cor:inflexman}
There exist infinitely many non-homotopically equivalent inflexible manifolds as highly connected as desired.
\end{corollary}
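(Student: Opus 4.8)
The plan is to obtain the statement as a direct harvest of Theorem~\ref{thminflexman}, letting the parameter $n$ vary. Fix once and for all a finite group $G$ (the trivial group will do) and, by Frucht's theorem \cite{Frucht1}, a finite simple graph $\mathcal G$ with $\Aut(\mathcal G)\cong G$. For each integer $n\ge 1$, Theorem~\ref{thminflexman} produces a closed smooth $(30n+17)$-connected inflexible manifold $\mathcal M_n(\mathcal G)$. Given any target connectivity $c$, the subfamily of those $\mathcal M_n(\mathcal G)$ with $30n+17\ge c$ is infinite and consists entirely of inflexible manifolds that are at least $c$-connected; this already accounts for the ``as highly connected as desired'' clause.

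It remains to argue that, within such a subfamily, the manifolds are pairwise non-homotopy equivalent. Here I would invoke the homotopy invariance of the formal dimension. By construction $\mathcal M_n(\mathcal G)$ has the rational homotopy type of $\tilde M_n(\mathcal G)$, whose formal dimension is $4\delta_n-1$, where $2\delta_n$ is the formal dimension of $M_n(\mathcal G)$ computed in Lemma~\ref{lemma: ellipticminimal}, namely $540n^2+984n+396+|V(\mathcal G)|(360n^2+436n+132)$. This quantity is strictly increasing in $n\ge 1$, hence so is $4\delta_n-1$; therefore the $\mathcal M_n(\mathcal G)$ have pairwise distinct dimensions and cannot be homotopy equivalent.

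The argument is essentially immediate, so there is no real obstacle; the one point worth stating explicitly is the strict monotonicity of the formal dimension in $n$, which one reads off from the explicit degrees in Definition~\ref{def: m_nG} via the formula~\eqref{dimensiontop}. If one preferred, one could instead keep $n$ fixed and vary $G$: by Theorem~\ref{thminflexman} the resulting manifolds have pairwise non-isomorphic groups of self-homotopy equivalences of their rationalizations, and are therefore non-homotopy equivalent as well. Either route yields infinitely many pairwise non-homotopy equivalent inflexible manifolds of arbitrarily high connectivity.
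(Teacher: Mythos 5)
Your argument is correct and is essentially the one the paper has in mind: the paper derives the corollary from Theorem~\ref{thminflexman} by letting $n$ grow, and calls this ``immediate.'' You rightly supply the one small ingredient the paper leaves tacit — that the formal dimension $540n^2+984n+396+|V(\mathcal G)|(360n^2+436n+132)$ from Lemma~\ref{lemma: ellipticminimal} (and hence the dimension $4\delta_n-1$ of $\mathcal M_n(\mathcal G)$) is strictly increasing in $n$, so the manifolds have distinct dimensions and are therefore pairwise non-homotopy equivalent.
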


We end with some results on the existence of strongly chiral manifolds, that is, manifolds that do not admit orientation-reversing self-maps of degree $-1$ (see \cite{Pup}, \cite{Am} or \cite{CV2}).
\begin{corollary}\label{cor:chiral}
For any $n\ge 1$ and $k>1$, there exists a rationally $(30n+17)$-connected strongly chiral manifold $M$ of dimension $(1080+720k)n^2+(1968+872k)n+264k+791$.
\end{corollary}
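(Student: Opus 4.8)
The plan is to build the desired strongly chiral manifold as a product of one of the inflexible manifolds constructed in Theorem~\ref{thminflexman} with a suitably chosen companion manifold whose degree obstruction breaks the $-1$ symmetry. First I would recall the standard fact (see \cite{Pup}, \cite{Am}, \cite{CV2}) that if $N$ is a closed oriented manifold admitting a continuous self-map of degree $-1$ then so does $N\times P$ for any closed oriented $P$; conversely, strong chirality is obtained by arranging that no self-map of the product can realize degree $-1$. The cleanest route is to take $M = \M_n(\G) \times \M_n(\G')$ (or $\M_n(\G)\times \tilde M$-type factor), where the two factors have \emph{coprime} or otherwise incompatible sets of realizable self-map degrees, so that the only way a self-map of the product could have degree $-1$ is componentwise, which is impossible because each $\M_n(\G)$ is already inflexible with degree set inside $\{0,1\}$ on the nose (not $\{-1,0,1\}$). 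Since $\E(\tilde M_n(\G))\cong\Aut(\G)\cong G$ and, by the degree analysis in the proof of Theorem~\ref{proposition: rigidalgebras} and Lemma~\ref{th:sullgraph}, every self-homotopy equivalence of these algebras has degree exactly $+1$ (the scalar $s$ can only be $0$ or $1$, never $-1$, precisely because $k$ is even so the relevant exponents force $a_1=a_2=1$), each building block is in fact strongly chiral rationally, and this is the property I would propagate.

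The key steps, in order: (1) Fix $\G$ with $\Aut(\G)\cong G$ as in Frucht's theorem, and fix an even $k>4$ of the form $k=6n+4$. (2) Invoke Theorem~\ref{teor:sullgraph} and the construction preceding Theorem~\ref{thminflexman} to get the elliptic inflexible algebra $\tilde M_n(\G)$ and, via Proposition~\ref{PascalDon}, a closed smooth $(30n+17)$-connected manifold $\M_n(\G)$ realizing it, of some explicit even-or-odd formal dimension computed from \eqref{dimensiontop} and Lemma~\ref{lemma: ellipticminimal}. (3) Form the product with a second copy built from a graph $\G'$ (or with a copy of the Arkowitz--Lupton-type manifold with different parameter) whose formal dimension and degree behaviour make a componentwise degree $-1$ impossible; compute the dimension of the product and check it matches the claimed value $(1080+720k)n^2+(1968+872k)n+264k+791$ after substituting the degrees $|x_1|=30n+18$, $|x_v|=180n^2+218n+66$, $|V(\G)|$, etc. (4) Verify that a self-map $\varphi$ of the product of degree $-1$ would, by the Künneth decomposition of the top cohomology and the fact that each factor's top class is indecomposable and "rigid" (only multiplied by $0$ or $1$ under self-maps, by Theorem~\ref{proposition: rigidalgebras}), have to send each factor's fundamental class to a scalar multiple of itself with scalars multiplying to $-1$; since each scalar lies in $\{0,1\}$, the product of scalars lies in $\{0,1\}$, never $-1$, giving strong chirality. (5) Confirm the product is still $(30n+17)$-connected (connectivity of a product is the minimum of the connectivities, and both factors are at least that connected) and that it is genuinely a manifold (product of closed smooth manifolds).

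The main obstacle I expect is step~(3)--(4): I must ensure that the top cohomology of $\M_n(\G)\times (\text{second factor})$ does not acquire "mixed" classes that a self-map could exploit to realize degree $-1$ without being a product of self-maps of the factors. For simply connected spaces this is controlled by the Künneth formula, $H^{top}(X\times Y)\cong H^{top}(X)\otimes H^{top}(Y)$ (one-dimensional since each factor satisfies Poincar\'e duality and is connected), so a self-map's effect on the top class is the product of its effects on the tensor factors; but one still has to argue that a self-homotopy equivalence of the product genuinely induces maps on each factor's cohomology that are scalar on the top class. This follows from the observation that the minimal Sullivan model of the product is the tensor product of the minimal models, and a self-map of that tensor product, restricted by degree reasons exactly as in the proofs of Theorem~\ref{proposition: rigidalgebras} and Lemma~\ref{lemma: principal} (the generators of the two factors live in different, non-interacting degrees because the formal dimensions and generator degrees are chosen to be "far apart"), must preserve each tensor factor up to coboundary; hence the degree splits as a product of the individual degrees, each of which lies in $\{0,1\}$. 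Once this non-interaction of the degree ranges is checked --- which is the genuinely technical point, handled by the same divisibility bookkeeping as Lemma~\ref{lemma: divalg} and Lemma~\ref{lemma:divalg2} --- the result follows and step~(5) is routine.
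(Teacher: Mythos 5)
Your plan is built around a product construction, $M = \M_n(\G) \times \M_n(\G')$, but this cannot produce the claimed dimension: the manifold $\M_n(\G)$ already has dimension $4\delta-1$ where $2\delta = 540n^2 + 984n + 396 + k(360n^2 + 436n + 132)$ is the formal dimension of $M_n(\G)$ (Lemma~\ref{lemma: ellipticminimal} with $|V(\G)|=k$), and substituting gives precisely $(1080+720k)n^2 + (1968+872k)n + 264k + 791$. So the corollary's dimension formula is the dimension of a \emph{single} $\M_n(\G)$; taking a product with a second manifold would roughly double it and produce a polynomial in $n$ of a different shape. Your step (3), where you propose to "compute the dimension of the product and check it matches," would fail at this point.

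Your first paragraph actually contains the correct idea — "each $\M_n(\G)$ is already ... strongly chiral rationally" — but you then abandon it in favor of the product, and you never pin down the mechanism that makes it true. The mechanism is \cite[Lemma 3.2]{CV2}: in the construction $\tilde A = (A \otimes \Lambda(z), dz = x)$ with $x$ a representative of the fundamental class of $A$, any self-map $\tilde f$ of $\tilde A$ satisfies $\deg(\tilde f) = \deg(\tilde f|_A)^2$, which is a square and hence cannot be $-1$. Applied with $A = M_n(\G)$, this gives directly that $\tilde M_n(\G)$, and so $\M_n(\G)$, is strongly chiral — regardless of whether the degrees realized on $M_n(\G)$ itself avoid $-1$, a point your argument relies on but does not fully establish (the automorphisms $f_\sigma$ of $M_n(\G)$ act on a one-dimensional top cohomology group, and a priori could act by $-1$; the squaring relation makes this irrelevant). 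In short: drop the product entirely, take $\G$ a connected simple graph on $k$ vertices, set $M = \M_n(\G)$, and cite \cite[Lemma 3.2]{CV2} for the chirality. The remaining issues you flag (Künneth splitting of the degree, "non-interaction of degree ranges," extra divisibility bookkeeping) then simply do not arise.
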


\begin{proof}
Let $\G$ be a connected simple graph with $k$ vertices, and let $\M_n(\G)$ be the corresponding manifold obtained in the proof of Theorem \ref{thminflexman} above, its minimal Sullivan algebra being $\tilde M_n(\G)$. Any self-map of $\tilde M_n (\G)$ is shown in \cite[Lemma 3.2]{CV2} to verify that $\deg (\widetilde f) = \deg (\widetilde f \vert_{M_n(\G)})^2$, thus it has degree $0$ or $1$. \end{proof}

Manifolds provided by Corollary \ref{cor:chiral} can be used to construct inflexible and strongly chiral product manifolds by exploiting techniques from \cite{Neo}. In \cite[Example 3.7]{Neo}, it is shown that given a simply connected inflexible manifold $N$ one can construct an inflexible product manifold $M\times N$ just by considering $M$ a closed oriented inflexible non-simply connected manifold that
does not admit maps of non-zero degree from direct products and $\dim N <\dim M$. Here we prove an ``inverse'':

\begin{corollary}\label{cor:products}
Let $M$ be a non necessarily simply connected closed oriented inflexible (resp.\ strongly chiral) manifold that
does not admit maps of non-zero degree from direct products. Then there exist simply connected strongly chiral manifolds $N$ such that $\dim N >\dim M$ and $M\times N$ is inflexible (resp.\ strongly chiral).
\end{corollary}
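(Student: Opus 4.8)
The plan is to produce, for each integer $n\ge 1$ and each finite simple connected graph $\G$ with $k$ vertices and $\Aut(\G)$ trivial, the manifold $\M_n(\G)$ of Corollary~\ref{cor:chiral}, and to use it as the simply connected factor $N$. First I would record the dimension of $\M_n(\G)$: it is the formal dimension $\tilde\delta = 4\delta-1$ of $\tilde M_n(\G)$, where by Lemma~\ref{lemma: ellipticminimal} the formal dimension of $M_n(\G)$ is $2\delta = 540n^2+984n+396+k(360n^2+436n+132)$. Computing $4\delta - 1$ gives exactly the dimension $(1080+720k)n^2+(1968+872k)n+264k+791$ appearing in Corollary~\ref{cor:chiral}, so I have an explicit family of simply connected, $(30n+17)$-connected, inflexible and strongly chiral closed oriented manifolds whose dimensions grow without bound in both $n$ and $k$. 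In particular, given $M$, I can choose $n$ (or $k$) large enough that $\dim N > \dim M$.

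Next I would invoke the product construction. For inflexibility this is precisely \cite[Example~3.7]{Neo}: if $M$ is closed oriented inflexible, non-necessarily simply connected, admits no maps of non-zero degree from non-trivial direct products, $N$ is simply connected inflexible, and $\dim N < \dim M$ — wait, here the roles are reversed, $N$ is the \emph{larger} factor. So I would instead appeal to the symmetric form of that argument: since $\M_n(\G)$ is also a manifold that admits no maps of non-zero degree from direct products other than trivial ones (this is automatic because it is strongly chiral and, being highly connected with the specific elliptic Sullivan model, a degree argument on cohomology rules out factorizations — alternatively, by the formal-dimension/Poincar\'e-duality count one checks there is no K\"unneth splitting of its fundamental cohomology class into two positive-degree pieces coming from a product), one applies the same computation of possible degrees of self-maps of $M\times N$: any self-map $g$ of $M\times N$ restricts, up to the degree-multiplicativity and the no-maps-from-products hypothesis, to self-maps of the two factors, so $\deg g = \deg(g|_M)\deg(g|_N)$, and each factor is inflexible, hence $\deg g \in \{-1,0,1\}$, i.e.\ $M\times N$ is inflexible.

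For the strongly chiral case I would upgrade the degree bookkeeping: by Corollary~\ref{cor:chiral} every self-map of $\tilde M_n(\G)$ has degree $\deg(\tilde f)=\deg(\tilde f|_{M_n(\G)})^2 \in\{0,1\}$, so $N=\M_n(\G)$ admits no self-map of degree $-1$. If moreover $M$ is strongly chiral, then in the product decomposition above $\deg g = \deg(g|_M)\deg(g|_N)$ with $\deg(g|_N)\in\{0,1\}$ and $\deg(g|_M)\in\{0,1\}$, so $\deg g\ne -1$ and $M\times N$ is strongly chiral. The one point requiring care, and the main obstacle, is justifying that a self-map (or, for the degree set, an arbitrary continuous map of non-zero degree) of $M\times N$ must split as a product of maps of the factors so that degrees multiply: this needs the hypothesis that $M$ does not receive maps of non-zero degree from non-trivial products together with the fact that $N=\M_n(\G)$ is simply connected and itself does not receive such maps, so that a non-zero-degree self-map of $M\times N$ cannot ``mix'' the factors. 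This is exactly the mechanism exploited in \cite[Example~3.7]{Neo} and I would cite that argument, verifying only that our $\M_n(\G)$ satisfies the relevant no-maps-from-products property — which follows from its high connectivity and the structure of $H^*(\tilde M_n(\G))$, an elliptic Poincar\'e duality algebra in which the fundamental class does not decompose compatibly with a K\"unneth product of two spaces each of positive formal dimension in the required range.
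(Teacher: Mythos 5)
Your proposal diverges from the paper's route at exactly the point you yourself flag as ``the main obstacle,'' and that obstacle is a genuine gap. You try to show that $N=\M_n(\G)$ \emph{itself} admits no maps of non-zero degree from nontrivial products, asserting this is ``automatic'' from strong chirality together with the elliptic Sullivan structure. Neither assertion is justified: strong chirality only rules out degree $-1$ self-maps and says nothing about dominant maps from products, while the vague K\"unneth/Poincar\'e-duality count you sketch would at best constrain the cup-product structure of $H^*(N;\Q)$, not the existence of a map from a product of positive degree. So the condition you want to feed into a ``symmetric'' version of \cite[Example~3.7]{Neo} is never established, and the rest of the argument (splitting of self-maps, multiplicativity of degree) does not get off the ground.

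The paper's proof is shorter and hinges on a different observation that you should replace your claim with. Choose $n$ large enough that $m=\dim M < 30n+17$, and take $N$ to be any of the $(30n+17)$-connected strongly chiral manifolds from Corollary~\ref{cor:chiral}. Then $H^m(N;\Q)=0$ by connectivity, so \emph{every} continuous map $N\to M$ kills $H^m(M;\Q)$, i.e.\ has degree zero. This is precisely the domination hypothesis of \cite[Theorem~1.4]{Neo} for the ordered pair $(M,N)$, and the inflexibility and strong-chirality conclusions then come directly from \cite[Corollary~1.5(c)]{Neo} and \cite[Corollary~1.5(a)]{Neo} respectively. In other words, the correct thing to verify about $N$ is not that it receives no dominant maps from products, but that it does not dominate $M$ --- and that follows for free from its connectivity once $n$ is large. (Also, restricting to $\Aut(\G)$ trivial is unnecessary; any $\G$ with more than one vertex works, since Corollary~\ref{cor:chiral} makes every $\M_n(\G)$ strongly chiral.)
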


\begin{proof}
Let $m=\operatorname{dim} M$, and $n\ge 1$ be an integer such that $m < 30n+17$. Let $N$ be any of the $(30n+17)$-connected strongly chiral manifolds from Corollary \ref{cor:chiral}. Since $H^m(N;\Q)=0$, any continuous map $f\colon N\to M$ maps $H^m(M;\Q)$ to $0$. Thus the pair $(M,N)$ is under the assumptions of \cite[Theorem 1.4]{Neo}, and the result follows from \cite[Corollary 1.5(c)]{Neo} (resp.\ \cite[Corollary 1.5(a)]{Neo}).
\end{proof}

\appendix
\section{By Pascal Lambrechts and Don Stanley}
\subsection*{Realization of rationally Poincar\'e duality complexes by highly connected bounding manifolds}

\begin{proposition}\label{PascalDon}
Let $X$ be a $k$-connected rational space ($k\geq1$) whose rational cohomology algebra satisfies
Poincar\'e duality in dimension $n\geq5$. When $n$ is a multiple of $4$, suppose furthermore that the quadratic form on
$H^{n/2}(X;\Q)$ has signature $0$ and descends from a quadratic form over $\Z$. Then there exists a $k$-connected
closed smooth  manifold rationally equivalent
to $X$ and that is a boundary of
a compact oriented manifold.
\end{proposition}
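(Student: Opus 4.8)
Here is the line of argument I would pursue. The idea is to realize $X$ rationally by a closed smooth manifold whose rational Pontryagin classes all vanish, and then to repair that manifold in two independent ways that do not disturb its rational homotopy type: first raising its connectivity to $k$ by surgery in low degrees, and then killing its oriented cobordism class by a connected sum with a rational homology sphere. The hypothesis $n\ge 5$ is what makes the surgery-theoretic steps available.

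First I would invoke the Barge--Sullivan realization theory \cite{Ba,Su}. Since $X$ is simply connected with $H^\ast(X;\Q)$ a Poincar\'e duality algebra of formal dimension $n$, and, when $n\equiv 0\pmod 4$, its middle cup-product form is the $\Q$-extension of a unimodular form over $\Z$, this theory yields a closed smooth $n$-manifold together with a rational equivalence to $X$; moreover the rational Pontryagin classes of the realizing manifold may be prescribed freely, subject only to Hirzebruch's constraint that the $L$-genus evaluate to $\sigma(X)$ on the fundamental class. As $\sigma(X)=0$ — automatic for $n\not\equiv 0\pmod 4$, and hypothesized otherwise — the choice of all rational Pontryagin classes equal to $0$ is admissible, so I may fix a closed smooth manifold $W$, rationally equivalent to $X$, with $p_i(W)=0$ in $H^{4i}(W;\Q)$ for every $i\ge 1$.

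Next I improve connectivity. If $2k\ge n-1$, Poincar\'e duality forces $H^\ast(X;\Q)$ to be concentrated in degrees $0$ and $n$, so $X$ is rationally a sphere and $S^n=\partial D^{n+1}$ already does the job; hence I may assume $k<n/2$. Since $X$ is $k$-connected, the groups $\pi_i(W)$ are finite for $1\le i\le k$, and I kill them one dimension at a time by surgery below the middle dimension — the needed framings exist because, below half the dimension, the normal bundles of the surgery spheres are stably detected by Pontryagin classes, which we have arranged to vanish, the residual $2$-primary subtleties being handled in the usual way. Destroying only finite homotopy groups leaves the rational cohomology, hence the rational homotopy type, unchanged, and surgery preserves the oriented cobordism class; so the resulting $W$ is a $k$-connected closed smooth manifold, rationally equivalent to $X$, still with all rational Pontryagin classes zero.

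Finally I make $W$ bound. Every Pontryagin number of $W$ is a polynomial in its rational Pontryagin classes, hence vanishes, so $[W]$ is torsion in $\Omega^{SO}_n$; by Wall's determination of the oriented cobordism ring this torsion has exponent $2$, so $2[W]=0$. It therefore suffices to produce a simply connected rational homology $n$-sphere $Y$ with $[Y]=[W]$: then $W\# Y$ is still $k$-connected, the complement of an open disk in $Y$ is rationally acyclic so the collapse map $W\# Y\to W$ is a rational equivalence, whence $W\# Y$ is rationally equivalent to $X$, and $[W\# Y]=[W]+[Y]=2[W]=0$, so $W\# Y$ bounds a compact oriented manifold. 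To build $Y$, I start from any closed oriented $n$-manifold representing $[W]$ and surger below the middle dimension — killing $\pi_1$, then $\pi_2$, and so on — to make it $\lfloor(n-1)/2\rfloor$-connected without altering its cobordism class; for $n$ odd this is already a rational homology sphere, while for $n$ even one further round of middle-dimensional surgery kills the remaining middle rational homology, the intersection form there being skew-symmetric if $n\equiv 2\pmod 4$ and symmetric of signature $0$ if $n\equiv 0\pmod 4$ (again because all Pontryagin numbers vanish). \textbf{The main obstacle} is precisely this last point: guaranteeing that a representative of a torsion cobordism class can be surgered into a rational homology sphere, i.e.\ that no rational surgery obstruction survives — and it is exactly the vanishing of the signature of $X$, propagated through the vanishing of all Pontryagin numbers, that ensures this.
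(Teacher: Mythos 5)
Your proof takes a genuinely different route from the paper's, and there is a gap in it. The paper's argument does not treat Barge--Sullivan as a black box producing a manifold with prescribed rational Pontryagin classes; instead it runs the Pontryagin--Thom construction directly on a trivial bundle $\varepsilon$ of rank $N>n$ over $X$. Transversality produces a smooth closed $n$-manifold $M'=f^{-1}(X)\subset S^{n+N}$ together with a \emph{degree-one normal map} $f\colon M'\to X$, and -- crucially -- with \emph{trivial stable normal bundle}, because the normal bundle of $M'$ in $S^{n+N}$ is pulled back from $\varepsilon$. Two things come for free from this: (i) the bundle data of the normal map supplies exactly the stable framings required to surger below the middle dimension (first to make $M'$ $k$-connected, then to make $f$ a rational equivalence up to the middle), with the Barge--Sullivan signature hypothesis killing the last obstruction; (ii) stable parallelizability forces all Pontryagin \emph{and} Stiefel--Whitney numbers of $M'$ to vanish, and these are cobordism invariants, so the surgered manifold $M$ is automatically a boundary. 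No connected-sum repair is needed.

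The gap in your version is the framing step. You arrange only for the \emph{rational} Pontryagin classes of $W$ to vanish, and then assert that the normal bundles of the surgery spheres in the range $i<n/2$ are "stably detected by Pontryagin classes, $\ldots$ the residual $2$-primary subtleties being handled in the usual way." This is not correct as stated: the normal bundle of an embedded $S^i\hookrightarrow W$ is, stably, $TW|_{S^i}$, whose obstruction to stable triviality lives in $\pi_{i-1}(SO)$. By Bott periodicity this group is often $\BZ/2$, which rational Pontryagin classes cannot see, and there is no "usual way" to dispose of these obstructions without either the bundle data of a normal map or genuine stable parallelizability. The same problem recurs, undiagnosed, in your construction of the rational homology sphere $Y$. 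So the part of the argument that is actually load-bearing -- finding the framings -- is precisely the part your proposal leaves to hand-waving, and it is precisely what the paper's trivial-bundle Pontryagin--Thom setup is designed to produce for free. Incidentally, your detour through the connected sum $W\#Y$ (with the attendant care about $[W]$ being $2$-torsion in $\Omega^{SO}_n$ by Wall's theorem) also becomes unnecessary once one works with a stably parallelizable representative from the start.
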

\begin{proof}
Let $\varepsilon$ be some trivial vector bundle of rank $N>n$ over $X$.
Then the Thom space of $\varepsilon$, $\operatorname{Th}(\varepsilon)$, is homotopy equivalent to $\Sigma^N(X_+)$ which is a wedge of rational spheres.
Since $X$ has Poincar\'e duality in dimension $n$ there is a homotopy class
$\alpha\in\pi_{n+N}(\operatorname{Th}(\varepsilon))$  whose image under the Hurewicz map is non trivial.
One can pick a representative $f\colon S^{n+N}\to \operatorname{Th}(\varepsilon)$ of $\alpha$ transverse to $X$
and then the preimage $M':=f^{-1}(X)\subset S^{n+N}$ is a smooth closed $n$-manifold with trivial stable normal bundle
which comes with a map $f\colon M'\to X$ non trivial in $H^n(-;\Q)$. Since $X$ is $k$-connected, the
 classical surgery argument makes $M'$ $k$-connected.
Further surgeries of dimensions $\geq k$
can turn $f$ into a rational homology equivalence up to the middle dimension. As explained in \cite[Section 13]{Su}
or \cite{Ba} the obstruction to do surgery in the middle dimension in our case vanish if the dimension is not a
multiple of $4$ or if the dimension is a multiple of $4$ and the quadratic form on $H^{n/2}(X;\Q)$ is equivalent to
$\sum_{i=1}^l(x_i^2-y_i^2)$ (that is it descends from a quadratic form over $\Z$ and the signature vanishes). This gives a $k$-connected manifold $M$ with a rational homotopy equivalence $M\simeq_\Q
X$.
Furthermore all the Pontrjagin and Stiefel-Whitney numbers of $M'$ vanish because it  has a trivial stable normal bundle,
which implies that $M'$, and hence $M$, is the boundary of an oriented compact manifold.
\end{proof}
\begin{remark} Jim Fowler and Zhixu Su have proved that we cannot drop the hypothesis that the signature vanishes when the dimension is a multiple of $4$, even if
 $X$ is realizable by a smooth manifold. Indeed, consider the unique simply-connected rational
  homotopy type $X$  with $H^*(X;\Q)\cong\Q[x]/(x^3)$ with $\deg(x)=16$. This rational homotopy type is realizable by a
  closed smooth manifold but it cannot be $2$-connected
(\cite[Theorem B]{FoSu:smo}).
\end{remark}

\begin{thebibliography}{A}





\bibitem{Am}  M.\ Amann, \emph{Mapping degrees of self-maps of simply-connected manifolds}, Int.\ Math.\ Res.\ Notices\ {\bf 18} (2015), 8545--8589.

\bibitem{A1} M.\ Arkowitz, \emph{Problems on self-homotopy equivalences, in: Groups of homotopy self-equivalences and related topics}, Contemp. Math.\  \textbf{274} (2001), 309--315.

\bibitem{AL2} M.\ Arkowitz, G.\ Lupton, \emph{Rational obstruction theory and rational homotopy sets}, Math.\ Z.\ \textbf{235}  (2000),  no.\ 3, 525--539.

\bibitem{Ba} J.\ Barge, \emph{Structures diff\'erentiables sur les types d'homotopie rationnelle simplement connexes}, Ann.\ Sci.\ \'Ecole Norm.\ Sup.\ ({4}),  \textbf{9} (1976), no.\ 4, 469--501.

\bibitem{CV2} C.\ Costoya, A.\ Viruel, \emph{Every finite group is the group of self homotopy equivalences of an elliptic space},  Acta Mathematica \textbf{213} (2014), 49--62.

\bibitem{CL} D.\ Crowley, C.\ L{\"o}h, \emph{Functorial seminorms on singular homology and (in)flexible manifolds}, Algebraic \& Geometric Topology {\bf 15} (2015), 1453--1499.

\bibitem{F1} Y.\ F\'elix, La dichotomie elliptique-hyperbolique en homotopie rationnelle. Ast\'erisque,  \textbf{176} (1989).

\bibitem{FHT1} Y.\ F\'elix, S.\ Halperin, J.C.\ Thomas, \emph{Elliptic spaces}, Bull.\ Amer.\ Math.\ Soc.\ \textbf{25} (1991), 69--73.

\bibitem{FHT2} Y.\ F\'elix, S.\ Halperin, J.C.\ Thomas,  Rational homotopy theory. Graduate Texts in Mathematics, \textbf{205}, Springer-Verlag, New York, 2001.

\bibitem{FoSu:smo}
Jim Fowler and Zhixu Su.
\newblock Smooth manifolds with prescribed rational cohomology ring.
\newblock {\em Geom. Dedicata}, 182:215--232, 2016.



\bibitem{Frucht1} R.\ Frucht, \emph{Herstellung  von Graphen mit vorgegebener abstrakter Gruppe}, Compositio Math.\ \textbf{6} (1939), 239--250.

\bibitem{Gromov2} M.\ Gromov, \emph{Volume and bounded cohomology}, Publ.\ Math.\ IHES \textbf{56} (1982), 5--99.

\bibitem{Gromov} M.\ Gromov, \emph{Metric structures for Riemannian and non-Riemannian spaces} with appendices by M.\ Katz, P.\ Pansu and S.\ Semmes, translated from the French by Sean Michael Bates. Progress in Mathematics \textbf{152}, Birkh\"auser, 1999.

\bibitem{Hal}   S.\ Halperin,  \emph{Finiteness in the minimal models of Sullivan}, Trans.\ Amer.\ Math.\ Soc.\  \textbf{230} (1977), 173--199.

\bibitem{Hopf1} H.\ Hopf, \emph{On Some Properties of One-Valued Transformations of Manifolds}, Proc.\ Nat.\ Acad.\ Sci.\ U.S.A.\ \textbf{14} (1928), 206--214

\bibitem{Hopf2} H.\ Hopf, \emph{Zur Algebra der Abbildungen von Mannigfaltigkeiten}, J.\ Reine Angew.\ Math.\ \textbf{163} (1930), 71--88.

\bibitem{ka1} D.\ Kahn, \emph{Realization problems for the group of homotopy classes of self-equivalences}, Math.\ Annal.\ \textbf{220} (1976), no.\ 1,  37--46.

\bibitem{ka3} D.\ Kahn, \emph{Some research problems on homotopy-self-equivalences}. In: Groups of self-equivalences and related topics, Lecture Notes in Math.\ \textbf{1425}, Springer, Berlin, (1990), 204--207.

\bibitem{Neo} C.\ Neofytidis, \emph{Degrees of self-maps of products}, Int.\ Math.\ Res.\ Not.\ (to appear).

\bibitem{Pup} V.\ Puppe, \emph{Simply connected $6$-dimensional manifolds with little symmetry and algebras with small tangent space}. In Prospects in topology (Princeton, NJ, 1994), Ann.\ of Math.\ Stud.\ \textbf{138}, 283--302.

\bibitem{Su} D.\ Sullivan, \emph{Infinitesimal computations in topology}, Inst.\ Hautes \'Etudes Sci.\ Publ.\ Math.\ \textbf{47} (1977), 269--331.





\bibitem{Wh} A.\ White, Graphs, Groups and Surfaces. Second Edition. North-Holland Mathematics Studies, \textbf{8}. North-Holand, (1984).


\end{thebibliography}

\bibliographystyle{amsalpha}

\end{document}